\theoremstyle{plain}
\newtheorem{mainthm}{Theorem}
\newtheorem{thm}{Theorem}[section]
\newtheorem{lem}[thm]{Lemma}
\newtheorem{claim}[thm]{Claim}
\newtheorem{prop}[thm]{Proposition}
\newtheorem{defi}[thm]{Definition}
\theoremstyle{definition}
\newtheorem{rem}[thm]{Remark}
\newcommand{\eqdef}{\stackrel{\scriptscriptstyle\rm def}{=}}
\def\l@part{\@tocline{0}{-2pt}{1pc}{}{}}
\def\l@section{\@tocline{1}{-2pt}{1pc}{4.6em}{}}
\renewcommand{\tocpart}[3]{%
  \indentlabel{\@ifnotempty{#2}{\makebox[2.3em][l]{%
    \ignorespaces#1 #2.\hfill}}}\bf{#3}}
\renewcommand{\tocsection}[3]{%
  \indentlabel{\@ifnotempty{#2}{\hspace*{2.3em}\makebox[2.3em][l]{%
    \ignorespaces#1 #2.\hfill}}}#3}
\newcommand{\Z}{\mathbb{Z}}
\newcommand{\R}{\mathbb{R}}
\newcommand{\N}{\mathbb{N}}
\let\oldtocsection=\tocsection
\let\oldtocsubsection=\tocsubsection
\renewcommand{\tocsection}[2]{\hspace{0em}\bf\oldtocsection{#1}{#2}}
\renewcommand{\tocsubsection}[2]{\hspace{4.8em}\oldtocsubsection{#1}{#2}}
\let\oldtocsubsubsection=\tocsubsubsection
\renewcommand{\tocsubsubsection}[2]{\hspace{4.2em}\oldtocsubsubsection{#1}{#2}}
\DeclareMathOperator{\spn}{span}
\newenvironment{psmallmatrix}
  {\left(\begin{smallmatrix}}
  {\end{smallmatrix}\right)}
\begin{document}
~\vspace{-0.5cm}
\title[Extremal exponents]{Extremal exponents of random products of conservative diffeomorphisms}
\author[Barrientos]{Pablo G. Barrientos}
\address{\centerline{Instituto de Matem\'atica e Estat\'istica, UFF}
    \centerline{Rua M\'ario Santos Braga s/n - Campus Valonguinhos, Niter\'oi,  Brazil}}
\email{pgbarrientos@id.uff.br}
\author[Malicet]{Dominique Malicet}
\address{LAMA, Université Paris-Est Marne-la-Vallée, Batiment Coppernic
5 boulevard Descartes} \email{mdominique@crans.org}


\begin{abstract}
We show that for a $C^1$-open and $C^{r}$-dense subset of the set
of ergodic iterated function systems of conservative
diffeomorphisms of a finite-volume manifold of dimension~$d\geq
2$, the extremal Lyapunov exponents do not vanish. In particular,
the set of non-uniform hyperbolic systems contains a $C^1$-open
and $C^r$-dense subset of ergodic random products of independent
conservative surface diffeomorphisms.
\end{abstract}
\setcounter{tocdepth}{2} \maketitle
~\vspace{-1.0cm} \thispagestyle{empty}

\section{Introduction}
The notion of uniform hyperbolicity introduced by Smale
in~\cite{Sm67} was early shown to be less generic than initially
thought~\cite{AS70,New70}. In order to describe a large set of
dynamical systems, Pesin theory~\cite{Pe77} provides a weaker
notion called \emph{non-uniform hyperbolicity}. These systems are
described in terms of non-zero Lyapunov exponents of the
\emph{linear cocycle} defined by the derivative transformation,
the so-called \emph{differential cocycle}. In contrast with the
non-density of hyperbolicity, we had to wait some decades to
construct the first examples of systems with robustly zero
Lyapunov exponents~\cite{KN07,BBD16}. Even in the conservative
setting there are open sets of smooth diffeomorphisms with
invariant sets of positive measure where all the Lyapunov
exponents vanish identically~(see~\cite{CS89,Her90,Xia92}).
However, recently in~\cite{LY17} it was showed that conservative
diffeomorphisms without zero exponent in a set of positive volume
are $C^1$-dense.

On the other hand, abundance of non-uniform hyperbolicity has been
obtained in the general framework of linear cocycles when the base
driving dynamics is fixed and the matrix group is perturbed in
many different contexts~\cite{F63,K92,AC97,V08,A11}. However,
nothing is known for random product of independent non-linear
dynamics. That is, for cocycles driving by a shift map endowed
with a Bernoulli probability to value in the group of
diffeomorphisms of a compact manifold. To perturbe the Lyapunov
exponents of these cocycles one must change the non-linear
dynamics similar to the case of differential cocycles. Examples of
iterated function systems (IFSs) of diffeomorphisms with robust
zero extremal Lyapunov exponents with respect to some ergodic
measure that not project on a Bernoulli measure were provided
in~\cite{gorodetski2005nonremovable,BBD14}. The authors
in~\cite{BBD14} question if such examples of IFS with robust zero
extremal Lyapunov exponents could be constructed by taking the
dynamics conservative and the ergodic measure defined as the
product measure of a shift invariant measure on the base and the
volume measure on the fiber. We give a partial negative answer to
this question by showing that the non-uniform hyperbolic systems
contain a $C^1$-open and $C^r$-dense subset of ergodic IFSs
generated by independent conservative surface
$C^r$-diffeomorphisms. In higher dimension we get the same result
for the extremal Lyapunov exponents.

\subsection{Random products of conservative diffeomorphisms}
An \emph{iterated function system} (IFS) can also be thought of as
a finite collection of functions which can be applied successively
in any order. We will focus on the study of IFSs generated by
$C^r$-diffeomorphisms $f_1,\dots,f_k$ of a finite-volume
Riemannian manifold $M$ which preserve the normalized Lebesgue
measure $m$. We will denote
$$
f^0_\omega=\mathrm{id}, \quad f_\omega^n=f_{\omega_{n-1}}\circ
\dots \circ f_{\omega_0}  \quad \text{for
    $\omega=(\omega_i)_{i\geq 0}\in\Omega_+\eqdef \{1,\dots,k\}^\mathbb{N}$ \ \text{and} \ $n>0$.}
$$

According to the random Oseledets multiplicative
theorem~\cite{LM06} there are real numbers $\lambda_-(x)\leq
\lambda_+(x)$ called \emph{extremal Lyapunov exponents} such that
for $\mathbb{P}_+$-almost every $\omega\in \Omega_+$ and
$m$-almost every $x\in M$,
$$
\lambda_-(x)=\lim_{n\to +\infty}\frac{1}{n}\log
\|Df_\omega^n(x)^{-1}\|^{-1} \quad \text{and} \quad
\lambda_+(x)=\lim_{n\to +\infty}\frac{1}{n}\log
\|Df_\omega^n(x)\|.
$$
Here $\mathbb{P}_+=p^\mathbb{N}$ is a Bernoulli measure on
$\Omega_+$ where $p=p_1\delta_1 +\dots +p_k \delta_k$ with $p_i>0$
and $p_1+\dots+p_k=1$. We will assume that $m$ is an ergodic
measure for the group generated by $f_1,\dots,f_k$. This means
that any strictly $f_i$-invariant measurable subset of $M$ for all
$i=1,\dots,k$ has either null or co-null measure. Hence,
$\lambda_\pm=\lambda_\pm(x)$ are constant $m$-almost everywhere.
Thus, $\lambda_\pm$ only depends on the conservative
$C^r$-diffeomorphisms $f_1,\dots, f_k$. Actually, these Lyapunov
exponents also depend on the Bernoulli probability but we will
consider the weights $p_i$ fixed and thus we will not explicit
this dependance. Thereby we will denote
$\lambda_\pm=\lambda_\pm(f_1,\dots,f_k)$. Moreover, since the
diffeomorphisms are conservative, we have that the sum of all
Lyapunov exponents must be zero, so that in particular,
$\lambda_-\leq 0\leq \lambda_+$.

Roughly speaking, we expect that for a generic choice of the
$f_1,\dots,f_k$ should exhibit some hyperbolicity, measured by the
non-nullity of the extremal Lyapunov exponents. In dimension one,
preserving Lebesgue measure forces that the group generated by
$f_1,\dots,f_k$ is conjugate to a subgroup rigid circle rotations
and thus, $\lambda_+=\lambda_-=0$. In fact, it is well-know that
the random independent products of finitely many diffeomorphisms
of the circle have non-null (actually, negative) Lyapunov
exponent, unless if all the diffeomorphisms preserve a common
probability measure (c.f.~\cite{malicet2017random}). Thus, from
now on, we will assume that the finite-volume Riemannian manifold
$M$ has dimension $d\geq 2$. Our main result is the following:

\begin{mainthm}\label{main}
    Given $r\geq 1$ and $k\geq 2$, consider conservative
    $C^r$-diffeomorphisms $f_1,\ldots,f_{k-1}$ of $M$
    such that the Lebesgue measure is ergodic for the
    group generated by these maps. Then there is a $C^1$-open
    and $C^r$-dense set $\mathcal{U}$ of conservative
    $C^r$-diffeomorphisms such that for any $f_k$  in $\mathcal{U}$,
    $$\lambda_-(f_1,\ldots,f_k)<0<\lambda_+(f_1,\ldots,f_k).$$
\end{mainthm}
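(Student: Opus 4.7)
The plan is to leverage the standard dichotomy for random cocycles: either the extremal Lyapunov exponents are bounded away from zero, or the derivative cocycle admits a common invariant probability on a projective bundle. The first alternative is $C^1$-robust, while the second can be destroyed by an arbitrarily small $C^r$-perturbation of~$f_k$.

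\emph{Step 1 (Invariance principle).} Consider the random cocycle $(\omega,x,v)\mapsto(\sigma\omega,f_{\omega_0}(x),Df_{\omega_0}(x)v)$ on $\Omega_+\times P(TM)$ over the Bernoulli shift. Among stationary probabilities $\mu$ on $P(TM)$ projecting to $m$, a Furstenberg-type formula expresses $\lambda_+=\sum_i p_i\int \log\|Df_i(x)v\|\,d\mu(x,v)$. The key ingredient is a Ledrappier/Avila--Viana invariance principle adapted to this setting: whenever $\lambda_+=0$, every such stationary $\mu$ is individually invariant under each $Df_i$ on $P(TM)$. The analogous statement for $\lambda_-$ follows by applying the reasoning to the inverse cocycle.

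\emph{Step 2 ($C^1$-openness).} If $\lambda_\pm(f_1,\ldots,f_k)\neq 0$, no common invariant probability on $P(TM)$ projecting to $m$ exists. Arguing by compactness of the space of probabilities on $P(TM)$, any accumulation point of invariant probabilities along a $C^1$-convergent sequence of perturbations would be a common invariant probability for the limit system, contradicting Step~1. Combined with the upper semicontinuity of the Furstenberg integral, this produces a $C^1$-neighbourhood on which $\lambda_\pm$ stay away from zero.

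\emph{Step 3 ($C^r$-density).} Given $f_k$ with a vanishing extremal exponent, Step~1 furnishes a common invariant probability $\mu$ on $P(TM)$. Choose a Lebesgue density point $x_0$ for the disintegration $\{\mu_x\}$ and construct a volume-preserving $C^r$-small perturbation $\tilde f_k$ of $f_k$, supported in a small ball around $x_0$, such that $D\tilde f_k(x_0)=A\cdot Df_k(x_0)$ for a prescribed $A\in\mathrm{SL}(d,\R)$. For generic $A$ the equivariance $(D\tilde f_k)_*\mu_{x_0}=\mu_{f_k(x_0)}$ fails; the Lebesgue density property then propagates the failure to a set of positive $m$-measure, so $\mu$ cannot be invariant for $(f_1,\ldots,f_{k-1},\tilde f_k)$. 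Invoking Step~1 in the contrapositive, we conclude $\lambda_\pm(f_1,\ldots,f_{k-1},\tilde f_k)\neq 0$.

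The main obstacle will be Step~3: one must verify both that such a localised conservative perturbation prescribing $A\in\mathrm{SL}(d,\R)$ exists with arbitrarily small $C^r$-norm, and that breaking the equivariance of $\mu$ at a single density point is actually enough to rule out \emph{every} common invariant probability for the perturbed IFS. This requires a careful measurable selection of the disintegration together with a density-point argument using the ergodicity of $m$ under the group generated by $f_1,\dots,f_{k-1}$. In dimension $d\geq 3$ the bundle $P(TM)$ must be replaced by appropriate flag bundles so that $\lambda_+$ and $\lambda_-$ can be handled separately, which introduces additional bookkeeping but no essential new idea.
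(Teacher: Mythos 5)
Your overall scheme—invoke an invariance principle so that vanishing of an extremal exponent forces a common invariant probability on $P(TM)$ projecting to $m$, then perturb $f_k$ to destroy every such measure—matches the paper's. The $C^1$-openness in Step 2 is essentially the argument the paper tacitly uses. But Step 3 contains a genuine gap, and the ingredient that makes the paper's density argument go through is precisely what your proposal is missing.

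The problem with Step 3 is twofold. First, prescribing $D\tilde f_k(x_0)=A\cdot Df_k(x_0)$ at a \emph{single} point $x_0$ cannot work: a single matrix $A\in\mathrm{SL}(d,\R)$ always preserves \emph{some} probability on $\mathbb{P}(\R^d)$ (e.g.\ Dirac masses at eigendirections), so there is no ``generic'' $A$ for which the equivariance relation fails for all possible $\nu_{x_0}$. Second, and more fundamentally, even if you did destroy the equivariance of the particular disintegration $\{\mu_x\}$ you started with, you have not ruled out that a \emph{different} common invariant measure $\mu'$—one whose disintegration $\{\mu'_x\}$ behaves differently near $x_0$—survives the perturbation. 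Since $f_1,\dots,f_{k-1}$ are left untouched, their common invariant measures form a whole compact convex set, and controlling the behaviour of one member at one point says nothing about the others. The ``Lebesgue density'' propagation you gesture at is not a proof; it is exactly the place where a priori structure on the disintegration is needed and is absent.

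The paper closes this gap with a classification theorem (Theorem~\ref{main-cocycle}, from Arnold--Cong--Oseledets): there is a measurable $P:M\to\mathrm{SL}^\pm(d)$, independent of the random sequence, such that after conjugating the derivative cocycle by $P$, \emph{every} $(\bar f,B)$-invariant probability on $X\times\mathbb{P}(\R^d)$ is a product measure $m\times\nu$ with $\nu$ a \emph{fixed} measure on $\mathbb{P}(\R^d)$. This is what makes the perturbation argument tractable: one now only has to defeat a single $\nu$ at a time, and one perturbs at \emph{two} regularity points $a\neq b$ so that the pair $(J_Pf_k(a),J_Pf_k(b))$ lands outside the set of pairs of matrices admitting a common invariant probability on $\mathbb{P}(\R^d)$—a set which, by a result of Avila--Santamaria--Viana, has empty interior in $\mathrm{SL}^\pm(d)\times\mathrm{SL}^\pm(d)$. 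The Lusin-type selection of regularity points for $P$ is exactly the careful measurable-selection step you correctly anticipate being needed, but without the prior reduction to product measures it cannot deliver the conclusion. As an aside, your final worry about flag bundles for $d\ge 3$ is unnecessary: in the conservative case all exponents sum to zero, so $\lambda_+>0$ and $\lambda_-<0$ are equivalent, and the whole argument can be run on $\mathbb{P}(\R^d)$ alone.
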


After the conclusion of this work, Obata and Poletti sent us a
preprint~\cite{OP18} where they get a similar result using
different approach. They proved that the set of systems with
positive integrated extremal Lyapunov exponent contains a
$C^1$-open and $C^1$-dense subset of random products of
i.i.d.~conservative diffeomorphisms of a compact connected
oriented~surface.

The scheme of the proof of Theorem~\ref{main} is the following.
Let us denote $\mathbb{P}(TM)$ the projective tangent space of
$M$. Given a diffeomorphism $f$ the differential cocycle $(f,Df)$
naturally acts on $\mathbb{P}(TM)$, and by a slight modification
we can actually see $(f,Df)$ as a cocycle acting on $M\times
\mathbb{P}(\mathbb{R}^d)$. By using this observation, we will
deduce that if
$\lambda_-(f_1,\ldots,f_k)=\lambda_+(f_1,\dots,f_k)=0$, then the
cocycles $(f_i,Df_i)$ share a common invariant measure on $M\times
\mathbb{P}(\mathbb{R}^d)$ projecting on $m$. Then, up to some
measurable modification of the $k-1$ first cocycles, one can
characterize (using the ergodicity assumption) the invariant
measures on $M\times \mathbb{P}(\mathbb{R}^d)$ by $(f_i,Df_i)$
projecting on $m$ for all $i=1,\dots,k-1$ as product measures.
This property is sufficiently rigid so that we will manage to
break it by a perturbation of the $k$-th diffeomorphism,
concluding thus that $\lambda_-(f_1,\ldots,f_k)$ needs to be
different from $\lambda_+(f_1,\dots,f_k)$. The fact that the
transformations preserve the Lebesgue measure only appears at the
time of performing this last perturbation which prevents
$(f_i,Df_i)$ from having a common invariant measure for all
$i=1,\dots,k$. The main step is the second one, that is, the
classification of the invariant measure of measurable cocycles
explained below in a more general setup.

\subsection{Measurable random cocycles}
Consider an invertible measure preserving transformation $f$ of a
standard Borel probability space $(X,\mu)$. Let $\mathrm{G}$ be a
locally compact topological group whose operation is denoted by
juxtaposition. Given a measurable function $A:X\to \mathrm{G}$ we
define the  \emph{$\mathrm{G}$-valued cocycle} over $f$ by the
dynamical defined products
$$
A^0(x)=\mathrm{id}, \quad  A^n(x) = A(f^{n-1}(x))\cdots
A(f(x))A(x) \quad \text{and} \quad A^{-n}(x)=A^n(f^{-n}(x))^{-1}
\quad \text{if \ $n>0$}.
$$
As usual, we denote this cocycle by $(f,A)$. We say that $(f,A)$
is a \emph{linear cocycle} if $\mathrm{G}$ is a subgroup of the
group $\mathrm{GL}(d)$ of invertible $d\times d$ matrices. 
We can always neglect sets of null measure, and we shall identify
cocycles which coincide $\mu$-almost surely.

There is a natural group structure on the set of
$\mathrm{G}$-cocycles over invertible transformations on
$(X,\mu)$. The product of two cocycles $(f,A)$ and $(g,B)$ is
defined by
$$
(g,B)\cdot(f,A)=(g\circ f,(B\circ f) A).
$$
In particular, the powers of $(f,A)$ are given by
$(f,A)^n=(f^n,A^n)$. Thus, the natural way to study the action of
several cocycles is the notion of \emph{cocycle over a group}. If
$\mathrm{T}$ is a group of measure preserving transformations of
$(X,\mu)$, the classical definition (see~\cite{Z84}) of a
$\mathrm{G}$-valued cocycle over $\mathrm{T}$ is a Borel function
$\alpha:\mathrm{T}\times X\rightarrow \mathrm{G}$ such that
$$
\alpha(ts,x)=\alpha(t,s(x))\alpha(s,x) \quad \text{for all \
$t,s\in \mathrm{T}$  \ and \  $\mu$-almost every \ $x\in X$.}
$$
Then, given invertible $\mu$-preserving transformations $f_i: X\to
X$ and measurable functions $A_i:X\to \mathrm{G}$ for all $i\in
Y$, one can define a cocycle $\alpha$ over  the  group
$\mathrm{T}$ generated by these transformations such that
$\alpha(f_i,x)=A_i(x)$.  However, we will not use this formalism.
With our terminology, by denoting $A_t(x)=\alpha(t,x)$ for $t\in
\mathrm{T}$, we identify $\alpha$ with the group of cocycles
$\hat{\mathrm{T}}=\{(t,A_t): t\in \mathrm{T}\}$. In order to study
the action of several cocycles, instead of looking a cocycle over
a group, we will actually prefer to use the the formalism of
\textit{random dynamical systems}, by defining the notion of
\emph{random cocycles} as \emph{random walks} on the group
$\hat{\mathrm{T}}$. To do this, we will start by considering a
particular class of cocycles.

\subsubsection{Random cocycle} Denote by $\Omega$
the product space $Y^\mathbb{Z}$ endowed with the product
mea\-sure $\mathbb{P}=p^\mathbb{Z}$ where $(Y,p)$ is some
probability space. Let $\theta:\Omega\to \Omega$ be the shift map
on $\Omega$. Set $\bar{X}=\Omega\times X$ and
$\bar{\mu}=\mathbb{P}\times \mu$. A \emph{random transformation}
is a measurable invertible skew-shift
\begin{equation*} \label{random-map}
  \bar{f}:  \bar{X} \to \bar{X},  \quad
  \bar{f}(\bar{x})=(\theta\omega,f^{}_\omega(x)) \quad
  \text{and  \ $\bar{x}=(\omega,x)\in \bar{X}$}
\end{equation*}
where $f_\omega=f_i$ depends only on the zeroth coordinate
$\omega_0=i$ of $\omega=(\omega_n)_{n\in\mathbb{Z}}\in\Omega$. We
will assume that $\bar{\mu}$ is a $\bar{f}$-invariant ergodic
measure. Observe that the invariance of this measure implies that
$\mu$ is a $f^{}_i$-invariant measure $p$-almost surely
(cf.~\cite{LM06}). Finally,
\begin{defi}
We say that $(\bar{f},A)$ is a \emph{random $\mathrm{G}$-valued
cocycle} if $A:\bar{X} \to \mathrm{G}$ defines a cocycle over a
$\bar{\mu}$-preserving ergodic random transformation
$\bar{f}:\bar{X}\to \bar{X}$ such that
$$
A(\bar{x})=A_i(x) \quad \text{for $\bar{\mu}$-almost every \
$\bar{x}=(\omega,x)\in \bar{X}=\Omega\times X$ \ with \
$\omega=(\omega_n)_{n\in\mathbb{Z}}$ \ and \ $\omega_0=i$.}
$$
\end{defi}

Obviously, a deterministic cocycle is a particular case of random
cocycle by taking $\Omega$ being a one-point space.  Thus, we can
also see the set of random cocycles as an extension of the
deterministic case.

\subsubsection{Cohomologous random cocycles} Two
$\mathrm{G}$-valued cocycles $(f,A)$ and $(f,B)$ over the same
$\mu$-preserving invertible transformation $f:X\to X$ are called
\emph{cohomologous} if there exists a measurable map $P:X\to
\mathrm{G}$ such that
$$
   B(x)=P(f(x))^{-1}A(x)P(x) \quad \text{for $\mu$-almost all
   $x\in X$.}
$$
In other words, $(f,B)=\phi_P^{-1} \circ (f,A) \circ \phi^{}_P$
where $\phi^{}_P$ denotes the cocycle $(\mathrm{id},P)$. Thus the
relation of cohomology defines a equivalence between measurable
cocycles. We are interesting to study the measurable cohomological
reduction of random cocycles. First, observe that the cohomology
class of a random cocycle could contain non-random cocycles. To
keep it into the class of random cocycles we need to ask that the
conjugacy $P:\bar{X}\to \mathrm{G}$ actually does not depend on
$\omega\in \Omega$. That is,
\begin{defi} Two random $\mathrm{G}$-valued
cocycles $(\bar{f},A)$ and $(\bar{f},B)$ are cohomologous if and
only if there is a measurable function $P:X\to \mathrm{G}$ such
that
$$
B(\bar{x})=P(f_\omega(x))^{-1}A(\bar{x})P(x) \quad \text{for
$\bar{\mu}$-almost every $\bar{x}=(\omega,x)\in
\bar{X}=\Omega\times X$.}
$$
Equivalently, if for $p$-almost every $i\in Y$ it holds that
$$
B_i(x)=P(f_i(x))^{-1}A_i(x)P(x) \quad \text{ $\mu$-almost every $x\in
X$.}
$$
\end{defi}

\subsubsection{Invariant measure of random cocycles}
Let $(\bar{f},A)$ be a random $\mathrm{G}$-valued cocycle. Since
these cocycles are locally constant, i.e., only depends on the
zeroth coordinate of the random sequence, for each $i\in Y$ we
have a $\mathrm{G}$-valued cocycle $(f_i,A_i)$. Denote by $Z$ a
$\mathrm{G}$-space. That is, a topological space where
$\mathrm{G}$ acts by automorphisms. The cocycles $(f_i,A_i)$
naturally act on $X\times Z$ by means of the skew-product maps
$$
  F_i\equiv(f_i,A_i):X\times Z \to X\times Z , \quad F_i(x,z)=
(f_i(x),A_i(x)z).
$$
\begin{defi} A measure $\hat\mu$ on $X\times Z$ is called
\emph{$(\bar{f},A)$-invariant} if $\hat\mu$ projects on $\mu$ and
it is $F_i$-invariant for $p$-almost every $i\in Y$. Similarly,
$\hat\mu$ is called \emph{$(\bar{f},A)$-stationary} if $\hat\mu$
project on $\mu$ and
$$
     \hat\mu \ast p \eqdef \int_Y F_i\hat\mu \, dp(i)=\hat\mu.
$$
\end{defi}

\subsection{Characterization of invariant measures of random linear cocycles}
We are interested in the projective invariant measures of random
linear cocycles $(\bar{f},A)$. That is, we want to understand the
$(\bar{f},A)$-invariant measure on $X \times Z$ of a random
$\mathrm{G}$-valued cocycles where $\mathrm{G}=\mathrm{GL(d)}$ and
$Z=\mathbb{P}(\mathbb{R}^d)$ is the projective space of
$\mathbb{R}^d$. This problem was previously studied by Arnold,
Cong and Oseledets in~\cite{ACO97} using ingredients of the Zimmer
amenable reduction theorem~\cite{Z84}. They proved that up to
cohomology all the projective invariant measures are deterministic
measures:

\begin{thm}[{\cite[Thm.~3.8]{ACO97}}]\label{main-cocycle}
Let $(\bar{f},A)$ be a  random $\mathrm{GL}(d)$-valued cocycle.
Then there is a random linear cocycle $(\bar{f},B)$ cohomologous
to $(\bar{f},A)$ such that every $(\bar{f},B)$-invariant measure
$\hat{\mu}$ on $X\times \mathbb{P}(\mathbb{R}^d)$ is a product
measure, i.e., it is of the form $\hat{\mu}=  \mu\times \nu$ where
$\nu$ is a measure on $\mathbb{P}(\mathbb{R}^d)$.
\end{thm}

The above result has gone something unnoticed in the literature on
cocycles.  In fact, we only found this result once we had obtained
a similar characterization of the invariant measures for random
$\mathrm{GL}(2)$-valued cocycles. Ours approach to classify the
invariant measures is substantially different from the proof
provides in~\cite{ACO97}. For instance, we do not use Zimmer
amenable reduction theory. Motivated by this difference, in order
to provide an alternative complete proof we have tried to extend
our ideas for random $\mathrm{GL}(d)$-valued cocycle with $d>2$.
Our approach consists in dividing the random cocycles into two
disjoint classes defined below:

\begin{defi}
    A random $\mathrm{G}$-valued cocycle $(\bar{f},A)$ is called
    \emph{essentially bounded} if there are a compact set
    $K$ of $\mathrm{G}$, a constant $\delta>0$ and a subset
    $E\subset \bar{X}$ of positive $\bar{\mu}$-measure
 satisfying that
 $$
 d(\{n\geq 0: A^n(\bar{x})\in  K\})\geq \delta \quad
 \text{for all $\bar{x}\in E$.}
 $$
where $d$ denotes the lower asymptotic density on
$\mathbb{N}\cup\{0\}$.
Otherwise, we say that $(\bar{f},A)$ is \emph{essentially
unbounded}.
\end{defi}

For essentially bounded random $\mathrm{GL}(d)$-valued cocycles we
have got generalized our ideas to
obtain~Theorem~\ref{main-cocycle}. In fact, this result will be
consequence of the following more general theorem for random
cocycles with values in a locally compact topological group
$\mathrm{G}$:

\begin{mainthm} \label{thm:measure-product-group}
Let $(\bar{f},A)$ be an essentially bounded random
$\mathrm{G}$-valued cocycle. Then there is a  random cocycle
$(\bar{f},B)$ with values on a compact subgroup $\mathrm{H}$ of
$\mathrm{G}$ such that
\begin{enumerate}
\item $(\bar{f},A)$ is cohomologous with $(\bar{f},B)$, and
\item  $\mu \times
m_{\mathrm{H}}$ is the unique $(\bar{f},B)$-stationary probability
measure on $X\times \mathrm{H}$.
\end{enumerate}
Moreover, any $(\bar{f},B)$-stationary probability measure
$\hat\pi$ on $X\times \mathrm{G}$ is a product measure of the form
$\hat\pi=\mu \times \nu$ with $\nu=m_H \ast \omega$ where $\omega$
is a measure on $\mathrm{G}$ and $m_\mathrm{H}$ is the Haar
measure on $\mathrm{H}$. In particular, every stationary
probability measure is, in fact, invariant.
\end{mainthm}

The compactness of $\mathrm{H}$ implies that must be essentially
contained in the maximal compact subgroup of $\mathrm{G}$. That
is, we can find $Q\in \mathrm{G}$ such that $Q^{-1}\mathrm{H}Q$ is
contained in the maximal compact subgroup of $\mathrm{G}$. Thus,
we get the following remark:

\begin{rem} \label{rem:O(2)}
Up to conjugacy, if $\mathrm{G}=\mathrm{GL}(d)$ we have that
$\mathrm{H} \subset \mathrm{O(d)}$ where $\mathrm{O}(d)$ denotes
the ortogonal group of $d\times d$ matrizes with real
coeficientes.
\end{rem}

Additionally, we characterize the essentially bounded cocycle as
indicated in the following result:

\begin{mainthm} \label{thm:equiv}
    Let $(\bar{f},A)$ be a random $\mathrm{G}$-valued cocycle.
    Then, it is  equivalent: \vspace{-0.1cm}
    \begin{enumerate}
        \item \label{item-eb} $(\bar{f},A)$ is essentially bounded,
        \item \label{item-st} there is a $(\bar{f},A)$-stationary probability measure on $X\times
        \mathrm{G}$,
        \item \label{item-cp} $(\bar{f},A)$ is cohomologous to a random cocycle with values on a compact subgroup  of $\mathrm{G}$.
    \end{enumerate}
\end{mainthm}

Unfortunately we have not been able to generalize our ideas to
prove Theorem~\ref{main-cocycle} for essentially unbounded
$\mathrm{GL}(d)$-cocycle with $d>2$. In the last section of this
paper we include our proof of Theorem~\ref{main-cocycle} for
essentially unbounded random $\mathrm{GL}(2)$-valued cocycles. In
this way, all the results in this paper provide a self-contained
proof of the main result (Theorem~\ref{main}) in dimension two.

\subsection{Invariance principle for random linear cocycles}
Let $(\bar{f},A)$ be a random linear cocycle.
When $\log^+ \|A^{\pm 1}\|$ are both $\bar{\mu}$-integrable
functions, by Furstenberg-Kesten theorem (see~\cite{Via14}) there
are real numbers $\lambda_-(A) \leq \lambda_+(A)$, called
\emph{extremal Lyapunov exponent of $(\bar{f},A)$}, such that
$$
\lambda_-(A)=\lim_{n\to +\infty}\frac{1}{n}\log
\|A^n(\bar{x})^{-1}\|^{-1} \quad \text{and} \quad
\lambda_+(A)=\lim_{n\to +\infty}\frac{1}{n}\log \|A^n(\bar{x})\|
$$
for $\bar{\mu}$-almost every $\bar{x}\in\bar{X}$. An
\emph{invariance principle} is an statement on the rigidity of the
projective invariant measure under the assumption of the
coincidence of extremal Lyapunov exponents. As a consequence of an
invariance principle of Ledrappier~\cite{Le86} (see
also~\cite{C90,Via14}) and the classification of invariant measure
obtained in Theorem~\ref{main-cocycle}, we will get the following:

\begin{mainthm} \label{thm3}
Let $(\bar{f},A)$ be a random $\mathrm{GL}(d)$-valued cocycle
satisfying the integral conditions. If
$\lambda_-(A)=\lambda_+(A)$, then there is a random linear cocycle
$(\bar{f},B)$ cohomologous to $(\bar{f},A)$ and a probability
measure $\nu$  on $\mathbb{P}(\mathbb{R}^d)$ such that
$B_i(x)\nu=\nu$ for $p$-almost every $i\in Y$ and $\mu$-almost
every $x\in X$.
\end{mainthm}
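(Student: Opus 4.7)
The plan is to combine two ingredients from the preceding subsections: Ledrappier's invariance principle, which promotes stationarity into honest invariance under the hypothesis $\lambda_-(A)=\lambda_+(A)$, and the classification Theorem~\ref{main-cocycle} of Arnold--Cong--Oseledets, which reduces invariant measures to product form after a cohomological change of coordinates.

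First, using compactness of $\mathbb{P}(\mathbb{R}^d)$, a routine averaging argument produces a $(\bar f,A)$-stationary probability measure $\hat\mu$ on $X\times\mathbb{P}(\mathbb{R}^d)$ projecting onto $\mu$. Next, I invoke Ledrappier's invariance principle in the form suited for random matrix products (as developed in~\cite{Le86,C90,Via14}): under the integrability of $\log^+\|A^{\pm 1}\|$, the equality $\lambda_-(A)=\lambda_+(A)$ forces $\hat\mu$ to be genuinely $(\bar f,A)$-invariant, i.e.\ $F_i\hat\mu=\hat\mu$ for $p$-almost every $i\in Y$. Morally, the coincidence of extremal exponents collapses a Jensen inequality on the conditional densities of the pushforwards of $\hat\mu$, removing the gap between the averaged equation $\hat\mu\ast p=\hat\mu$ and the individual invariance under each $F_i$.

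Armed with this $(\bar f,A)$-invariant measure, I apply Theorem~\ref{main-cocycle} to obtain a cohomologous random cocycle $(\bar f,B)$, with conjugating map $P:X\to\mathrm{GL}(d)$, such that every $(\bar f,B)$-invariant measure on $X\times\mathbb{P}(\mathbb{R}^d)$ has product form. The fibered map $\Phi(x,v)=(x,P(x)v)$ provides a conjugacy between the skew-products associated with $(\bar f,B)$ and $(\bar f,A)$, so transporting $\hat\mu$ through $\Phi$ yields a $(\bar f,B)$-invariant measure projecting on $\mu$, which by Theorem~\ref{main-cocycle} must equal $\mu\times\nu$ for some probability $\nu$ on $\mathbb{P}(\mathbb{R}^d)$. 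Finally, unfolding the $F_i^B$-invariance of $\mu\times\nu$ via Fubini and a change of variables along the $\mu$-preserving transformation $f_i$ gives $B_i(x)\nu=\nu$ for $\mu$-almost every $x\in X$ and $p$-almost every $i\in Y$, which is exactly the statement.

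The main obstacle is the second step: stating and verifying Ledrappier's invariance principle precisely in the random-cocycle framework at hand. Classical versions handle either i.i.d.\ products on a single projective space or deterministic cocycles over a fixed base; here one must confirm that the Bernoulli structure of $\bar f$ together with the locally constant form of $A$ still support the entropy/martingale argument that upgrades $\hat\mu\ast p=\hat\mu$ into $F_i\hat\mu=\hat\mu$ for $p$-a.e.\ $i$ \emph{separately}, rather than only on average. Granting this, the remaining steps (existence of a stationary measure, application of Theorem~\ref{main-cocycle}, and the Fubini computation) are essentially soft.
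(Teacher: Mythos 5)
Your proposal follows essentially the same route as the paper: produce a $(\bar f,A)$-stationary measure by averaging, upgrade it to a genuine $(\bar f,A)$-invariant measure via Ledrappier's invariance principle, then feed that into Theorem~\ref{main-cocycle} and unfold the resulting product measure to get $B_i(x)\nu=\nu$. The one step you flag as an obstacle is resolved in the paper exactly as you anticipate: one passes to the one-sided skew-shift on $\bar X_+=\Omega_+\times X$, lifts the stationary measure $\hat\mu$ to $\eta_+=\mathbb{P}_+\times\hat\mu$, observes that the disintegration $\nu_{\bar x}$ of $\eta_+$ depends only on $x$ (not on $\omega$), and then invokes the non-invertible version of Ledrappier's invariance principle (\cite[Prop.~2 and Thm.~3]{Le86}, or \cite[Thm.~7.2]{Via14}) over this base, which yields $\nu_{f_i(x)}=A_i(x)\nu_x$ for $p$-almost every $i$ and $\mu$-almost every $x$ simultaneously, precisely because the base is an ergodic m.p.t.\ and $A$ is locally constant.
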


\begin{rem} \label{rem-final}
In Theorem~\ref{main-cocycle} and~\ref{thm3}  the  random
$\mathrm{GL}(d)$-valued cocycles $(\bar{f},A)$ and  $(\bar{f},B)$
are cohomologous by means of a measurable function $P: X\to
\mathrm{SL}^\pm(d)$. Here $\mathrm{SL}^\pm(d)$ denotes the group
of $d\times d$ matrices with determinant $\pm 1$.
\end{rem}

\subsection{Organization of the paper} In the  following
section we will prove Theorem~\ref{main} using
Theorem~\ref{main-cocycle} and assuming Theorem~\ref{thm3}. The
proof of Theorem~\ref{thm3} will be provided in
Section~\S\ref{sec:invariance}. In Section~\S\ref{sec:bounded} we
will prove Theorem~\ref{thm:measure-product-group},
Theorem~\ref{thm:equiv} and will provide an alternative proof of
Theorem~\ref{main-cocycle} for essentially bounded random
$\mathrm{GL}(d)$-valued cocycles. Finally, in
Section~\S\ref{sec:unbounded} we study essentially unbounded
$\mathrm{GL}(2)$-valued linear cocycles including an alternative
proof of Theorem~\ref{main-cocycle} in this case.

\section{Extremal exponents: Proof  of Theorem~\ref{main}}
\label{sec:main proof} We fix a finite-volume Riemannian manifold
$M$ of dimension $d\geq 2$.  At several places, we will need to
look the matrix of a differential of a diffeomorphism of $M$ in
some basis. In view of this, we denote by $\mathscr{B}$ the set of
measurable maps $P$ on $M$ such that $P(x)$ is an invertible linear map from
$T_xM$ to $\R^d$ for any $x\in M$. We also denote by $\mathscr{O}$
the subset of $\mathscr{B}$ such that for any $P\in\mathscr{O}$ we
have that $P(x)$ is an isometry (where $\R^d$ is endowed with the
Euclidean norm and $T_xM$ with the Riemannian structure). Then,
given $P\in\mathscr{B}$ and a $C^r$-diffeomorphism $f$ preserving
the normalized Lebesgue measure $m$, we define
$$
J_Pf(x)= P(f(x))\circ Df(x)\circ P(x)^{-1} \quad \text{for all
$x\in M$.}
$$
Thus, $J_Pf(x)$ belongs to $\mathrm{GL}(d)$ and actually to
$\mathrm{SL}^\pm(d)$ if $P\in \mathscr{O}$. Similarly we define
the class
$$
  \mathscr{S}=\{P\in \mathscr{B}: \, P =  R Q \ \text{with} \
  R\in \mathrm{SL}^\pm(d) \ \text{and} \ Q \in \mathscr{O}\}.
$$
Notice that $\mathscr{O}\subset \mathscr{S}\subset \mathscr{B}$
and $J_Pf(x)\in \mathrm{SL}^\pm(d)$ for all $P\in
\mathscr{S}$.

\subsection{Invariance principle} We fix $k\ge 2$ and let
$f_1,\ldots,f_k$ be $C^r$-diffeomorphisms of $M$ preserving  $m$.
Assume that the group generated by these maps is ergodic with
respect to  $m$.  Consider a probability measure
$p=p_1\delta_1+\dots+p_k\delta_k$ on $Y=\{1,\dots,k\}$ and set
$\mathbb{P}=p^\mathbb{Z}$ on $\Omega=Y^\mathbb{Z}$. Let $\bar{f}$
be the skew-shift on $\bar{M}=\Omega\times M$ given by
$\bar{x}=(\omega,x)\mapsto (\theta\omega,f_\omega(x))$ where
$f_\omega=f_i$ if the zeroth coordinate of
$\omega=(\omega_n)_{n\in\mathbb{Z}}$ is $\omega_0=i$. Observe that
$\bar{f}$ preserves the measure $\bar{m}=\mathbb{P}\times m$. For
a fixed $P\in\mathcal{B}$ we define the measurable map
$$
J_P\bar{f}:\bar{M}\to \mathrm{GL}(d), \quad
J_P\bar{f}(\bar{x})=J_Pf_\omega(x) \ \ \text{for
$\bar{x}=(\omega,x)\in \bar{M}$.}
$$
Then, $(\bar{f},J_P\bar{f})$ is a random $\mathrm{GL}(d)$-valued
cocycle.

As we did in the introduction, we define the extremal Lyapunov
exponents $\lambda_\pm(f_1,\ldots,f_k)$. Notice that if
$\lambda_-(f_1,\dots,f_k)=\lambda_+(f_1,\dots,f_k)$, then
$\lambda_-(J_P\bar{f})=\lambda_+(J_P\bar{f})$ for all
$P\in\mathcal{O}$. Thus, as a consequence of Theorem~\ref{thm3}
(actually of Proposition~\ref{step1}), we have the following
result.
\begin{prop}\label{step2}
Assume that $\lambda_-(f_1,\dots,f_k)=\lambda_+(f_1,\dots,f_k)$.
Then
for every $P\in
\mathscr{B}$, the random linear cocycle  $(\bar{f},J_P\bar{f})$
admits an invariant measure on $M\times \mathbb{P}(\R^d)$.
\end{prop}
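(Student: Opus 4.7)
The strategy is to reduce the statement to a direct application of Theorem~\ref{thm3}. Given an arbitrary $P\in\mathscr{B}$, perform the (measurable) polar decomposition $P(x)=R(x)Q(x)$, where $Q(x)\colon T_xM\to\mathbb{R}^d$ is a linear isometry (so $Q\in\mathscr{O}$) and $R(x)\in\mathrm{GL}(d)$ is positive-definite. A direct calculation gives
$$
J_P\bar{f}(\bar{x})=R(f_\omega(x))\,J_Q\bar{f}(\bar{x})\,R(x)^{-1},\qquad \bar{x}=(\omega,x)\in\bar{M},
$$
so the skew-products $F_i^P=(f_i,J_Pf_i)$ and $F_i^Q=(f_i,J_Qf_i)$ acting on $\bar{M}\times\mathbb{P}(\mathbb{R}^d)$ are intertwined, for every $i$, by the measurable fiber map $\Phi_R(\bar{x},[v])=(\bar{x},[R(x)v])$. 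Hence pushing forward by $\Phi_R$ sets up a bijection between invariant measures of $(\bar{f},J_Q\bar{f})$ and $(\bar{f},J_P\bar{f})$, and it suffices to establish the proposition for $Q\in\mathscr{O}$.

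Now fix $Q\in\mathscr{O}$. Because each $f_i$ is conservative, $J_Q\bar{f}$ takes values in $\mathrm{SL}^\pm(d)\subset\mathrm{GL}(d)$; and because $Q(x)$ is an isometry we have $\|J_Q\bar{f}^{\,n}(\bar{x})^{\pm 1}\|=\|Df_\omega^n(x)^{\pm 1}\|$ for every $n$, so the integrability hypotheses of Theorem~\ref{thm3} are inherited from the differential cocycle, and the extremal Lyapunov exponents of $(\bar{f},J_Q\bar{f})$ coincide with $\lambda_\pm(f_1,\dots,f_k)$, which are assumed equal. Theorem~\ref{thm3} then produces a random linear cocycle $(\bar{f},B)$ cohomologous to $(\bar{f},J_Q\bar{f})$ via a measurable $P_0\colon M\to\mathrm{SL}^\pm(d)$ (see Remark~\ref{rem-final}), together with a probability measure $\nu$ on $\mathbb{P}(\mathbb{R}^d)$ satisfying $B_i(x)\nu=\nu$ for $p$-a.e.~$i$ and $m$-a.e.~$x$. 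In particular $\bar{m}\times\nu$ is a $(\bar{f},B)$-invariant measure on $\bar{M}\times\mathbb{P}(\mathbb{R}^d)$.

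Pulling this measure back through the cohomology $P_0$, namely pushing it forward by the measurable map $\Phi_{P_0}(\bar{x},[v])=(\bar{x},[P_0(x)^{-1}v])$, yields a $(\bar{f},J_Q\bar{f})$-invariant measure, and a final application of $\Phi_R$ then produces the required invariant measure for $(\bar{f},J_P\bar{f})$. The only non-cosmetic step is the polar-decomposition reduction to $\mathscr{O}$; everything afterwards is packaged into Theorem~\ref{thm3}, whose hypotheses become transparent once we pass to an isometric representative $Q\in\mathscr{O}$.
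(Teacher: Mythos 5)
Your proof is correct and follows essentially the same route as the paper: reduce to an isometric frame $Q\in\mathscr{O}$, apply Theorem~\ref{thm3} (via Proposition~\ref{step1}) to obtain an invariant measure for $(\bar{f},J_Q\bar{f})$, and transport it back to $(\bar{f},J_P\bar{f})$ through the cohomology $R$. The only cosmetic difference is that you invoke the polar decomposition to produce $Q$, whereas the paper simply fixes an arbitrary $Q\in\mathscr{O}$ and sets $R(x)=Q(x)P(x)^{-1}$; both choices yield the same conjugacy relation, so the extra structure of the polar factorization is not needed.
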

\begin{proof}
Let $Q\in \mathscr{O}$. Consider the random
$\mathrm{SL}^\pm(d)$-valued cocycle $(\bar{f},J_Q\bar{f})$. From
the assumption we obtain that
$\lambda_-(J_Q\bar{f})=\lambda_+(J_Q\bar{f})$. Hence, according to
Theorem~\ref{thm3},  we find a random cocycle $(\bar{f},B)$
cohomologous to $(\bar{f},J_Q\bar{f})$ with a product invariant
measure on $M\times \mathbb{P}(\R^d)$. In particular
$(\bar{f},J_Q\bar{f})$ has an invariant measure. This proves the
proposition since for any $P\in \mathscr{B}$,
$J_Pf_i(x)=R^{-1}(f_i(x))J_{Q}f_i(x)R(x)$ where
$R(x)=Q(x)P(x)^{-1}$.  
\end{proof}


\subsection{Breaking the invariance by perturbation}

Now, we are going to prove the following:
\begin{prop}\label{step3}
    For any $P\in \mathscr{S}$, the set $\mathcal{R}_P$ of the maps
    $f$ in $\mathrm{Diff}_m^r(M)$ such that $(f,J_Pf)$
    has no invariant measure on $M\times \mathbb{P}(\R^d)$ of
    the form $m\times\nu$ is open and dense in $\mathrm{Diff}_m^r(M)$.
\end{prop}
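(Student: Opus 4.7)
The proof will split into openness (in $C^1$) and density (in $C^r$).

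\emph{Openness.} The plan is to use the measurable conjugacy $\phi_P\colon\mathbb{P}(TM)\to M\times\mathbb{P}(\mathbb{R}^d)$, $(x,[v])\mapsto(x,[P(x)v])$, which intertwines $(f,Df)$ and $(f,J_Pf)$. A measure $m\times\nu$ is $(f,J_Pf)$-invariant if and only if $\hat\mu_\nu:=\int_M\delta_x\otimes P(x)^{-1}_*\nu\,dm(x)$ is $(f,Df)$-invariant on the compact space $\mathbb{P}(TM)$. Given $f_n\to f$ in $C^1$ with each $f_n\notin\mathcal R_P$ and corresponding $\nu_n$, I extract a weak-$*$ subsequential limit $\nu_n\to\nu$ (compactness of probabilities on $\mathbb{P}(\mathbb{R}^d)$), then use dominated convergence to get $\hat\mu_{\nu_n}\to\hat\mu_\nu$ weakly, and uniform convergence of $Df_n$ to $Df$ on compact $\mathbb{P}(TM)$ to pass the invariance to the limit. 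This gives $f\notin\mathcal R_P$, so $\mathcal R_P$ is $C^1$-open.

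\emph{Density.} Write $I(g):=\{\nu:J_Pg(x)\nu=\nu\text{ for $m$-a.e.\ }x\}$. The key reduction is an \emph{essential-range criterion}: $\nu\in I(g)$ if and only if the essential range of $x\mapsto J_Pg(x)$ in $\mathrm{SL}^\pm(d)$ is contained in the stabilizer $\mathrm{Stab}(\nu)$ (continuity of the action together with the fact that any open neighborhood of a point in the essential range has positive-measure preimage). Since $\mathrm{SL}^\pm(d)$ admits no invariant probability measure on $\mathbb{P}(\mathbb{R}^d)$ for $d\ge 2$, each $\mathrm{Stab}(\nu)$ is a proper closed subgroup, and any Zariski-dense pair $(M_1,M_2)\in\mathrm{SL}^\pm(d)^2$ generates a subgroup not contained in any $\mathrm{Stab}(\nu)$. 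Such pairs are dense among all pairs of matrices, so density reduces to producing $g$ close to $f$ in $C^r$ whose essential range of $J_Pg$ contains some Zariski-dense pair.

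\emph{Construction and the main obstacle.} The perturbation will come from the standard conservative local realization lemma (a Moser-type pasting argument): for any $x_0\in M$ and any $A\in\mathrm{SL}^\pm(d)$ sufficiently $C^0$-close to $Df(x_0)$ in a chart, there is an arbitrarily $C^r$-small conservative perturbation $g$ of $f$, equal to $f$ off a small ball around $x_0$, with $Dg\equiv A$ on a smaller concentric ball. The main obstacle is the measurability of $P$: since $P$ is only measurable, $J_Pg$ is in general discontinuous, and I cannot prescribe its values pointwise. I will bypass this by selecting the two perturbation centers $x_1,x_2$ as Lebesgue density points of a common Lusin continuity set for $P$ and $P\circ f$ (a set of full measure in $M$). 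Then on a positive-measure subset of each small perturbation ball where $Dg\equiv A_i$, the value $J_Pg(x)=P(g(x))\,Dg(x)\,P(x)^{-1}$ is uniformly close to $M_i:=P(f(x_i))\,A_i\,P(x_i)^{-1}$; choosing $A_1,A_2$ so that $(M_1,M_2)$ is Zariski-dense then places $M_1,M_2$ into the essential range of $J_Pg$ and forces $I(g)=\emptyset$.
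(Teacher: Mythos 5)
Your overall strategy for the density part is close in spirit to the paper's (Lusin regularity plus a two-point perturbation plus a fact about pairs of matrices with no common projective invariant measure), but the specific construction you propose has a genuine gap, and the route to the conclusion differs from the one the paper uses.

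The gap is the conservative local realization lemma. You claim that one can make a $C^r$-small conservative perturbation $g$ of $f$, equal to $f$ off a small ball around $x_0$ and with $Dg\equiv A$ on a smaller concentric ball. For $r\geq 2$ this is simply false when $D^2f(x_0)\neq 0$: if $Dg$ is constant on a ball, then $D^2g\equiv 0$ there, so $\|D^2(g-f)\|_{C^0}$ near $x_0$ is bounded below by roughly $\|D^2f(x_0)\|$, no matter how small the ball is. Hence $\|g-f\|_{C^r}$ cannot be made arbitrarily small, and the argument fails to prove $C^r$-density. (The $C^1$ version of such a realization does exist and would give $C^1$-density, but the proposition asserts density in $\mathrm{Diff}^r_m(M)$.) The construction is also unnecessary: you do not need $Dg$ literally constant on a ball to populate the essential range of $J_Pg$. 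As you yourself observe, at a Lebesgue density point $x_0$ of a Lusin continuity set for $P$ and $P\circ g$, the essential range of $J_Pg$ automatically contains $J_Pg(x_0)$, because $J_Pg$ is continuous on a positive-measure set near $x_0$. The paper exploits exactly this (Lemma~\ref{pseudo-cont}) and produces the two-point perturbation by composing with a conservative diffeomorphism $u$ fixing $a$ and $b$ and $C^r$-close to the identity; then $J_Pg(a)=J_Pf(a)\,P(a)Du(a)P(a)^{-1}$ (and similarly at $b$) ranges over an open set in $\mathrm{SL}^\pm(d)^2$ as $u$ varies, with $\|g-f\|_{C^r}$ genuinely small. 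This is the construction you should use in place of the affine realization.

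Two smaller points in the Zariski-density route. First, with the corrected construction, the essential range contains a pair only \emph{close} to a preassigned $(M_1,M_2)$, not $(M_1,M_2)$ itself, so you need that Zariski-dense generation is an open (indeed Zariski-open) condition on pairs, or else argue as the paper does via empty interior of the bad set. Second, to pass from "$\mathrm{Stab}(\nu)$ is a proper closed subgroup" to "a Zariski-dense pair cannot lie in $\mathrm{Stab}(\nu)$" you need Furstenberg's lemma: $\mathrm{Stab}(\nu)$ is either precompact (hence conjugate into $\mathrm{O}(d)$) or preserves a finite union of proper projective subspaces, and in either case is contained in a proper algebraic subgroup. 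Merely "proper and closed" would not suffice, since there exist proper closed Zariski-dense subgroups (e.g.\ lattices). The paper instead appeals to the empty-interior statement in \cite[Prop.~8.14, Rem.~8.15]{ASV13}, which packages the same content. Once these points are addressed, your Zariski-density route is a legitimate alternative to the paper's Baire-type argument via the map $\Phi$ and \cite{ASV13}; it makes the algebraic content (proper stabilizers versus Zariski-dense pairs) more explicit, at the price of invoking openness of Zariski-density, while the paper's version only needs the weaker empty-interior fact. Your openness argument is fine and is the natural fleshing-out of what the paper calls "obvious."
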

The openness of $\mathcal{R}_P$ is obvious so we focus in proving
the density. A small issue is the a priori absence of continuity
points of $P$. We bypass the problem by defining points of
regularity of $P$ in a weaker sense: by Lusin Theorem, there
exists a increasing sequence $(E_j)_{j\in\N}$ of measurable
subsets of $M$ whose Lebesgue measure goes to $1$ such that $P$
restricted to $E_j$ is continuous. Moreover, up to replace $E_j$
by the subset of its density points, we can assume that every
point of $E_j$ is a density point. Finally we set $E=\cup_j E_j$.
The set $E$ has full measure and will be considered as the
regularity points of $P$. The following lemma express the way we
will use this regularity:

\begin{lem} \label{pseudo-cont} Let $f$ be a conservative
$C^r$-diffeomorphism of $M$ and consider $x_0 \in E\cap
f^{-1}(E)$. If the linear cocycle $(f,J_Pf)$ has an invariant
measure of the form $m\times \nu$, then $J_Pf(x_0)\nu=\nu$.
\end{lem}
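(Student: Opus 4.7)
My plan is to upgrade the almost-everywhere invariance $J_Pf(x)\nu=\nu$ (which is forced by the product structure of $m\times\nu$) to the pointwise statement at $x_0$ by an approximation argument, using the density-point refinement of the Lusin sets $E_j$ together with the continuity of $Df$.

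First, I would unpack what it means for $m\times\nu$ to be $(f,J_Pf)$-invariant. Disintegrating along the projection to $M$ and using that $f$ preserves $m$, one checks that the set
\[
S=\bigl\{x\in M:\ (J_Pf(x))_{\ast}\nu=\nu\bigr\}
\]
has full $m$-measure in $M$. So the task reduces to showing that $x_0$ lies in the \emph{closure} (in a sufficiently strong sense) of $S$ along a sequence on which the matrix $J_Pf(\cdot)$ converges to $J_Pf(x_0)$.

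The second step is to choose a good Lusin level. Since $x_0\in E=\bigcup_jE_j$ and $f(x_0)\in E$, and since $(E_j)$ is increasing, I can pick $j$ with $x_0\in E_j$ and $f(x_0)\in E_j$. By construction, both $x_0$ and $f(x_0)$ are density points of $E_j$. Since $f$ is a $C^1$-diffeomorphism (hence locally bi-Lipschitz with non-degenerate Jacobian), $x_0$ is also a density point of $f^{-1}(E_j)$. The full-measure set $S$ has $x_0$ as a density point trivially, and a finite intersection of sets having $x_0$ as a density point still has $x_0$ as a density point. Therefore there exists a sequence $x_n\to x_0$ with
\[
x_n\in E_j\cap f^{-1}(E_j)\cap S\qquad\text{for every }n.
\]

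Third, I would pass to the limit. On $E_j$ the map $P$ is continuous, so $P(x_n)\to P(x_0)$, and since $f(x_n)\in E_j$ with $f(x_n)\to f(x_0)$, also $P(f(x_n))\to P(f(x_0))$. Combined with the continuity of $Df$, this gives
\[
J_Pf(x_n)=P(f(x_n))\circ Df(x_n)\circ P(x_n)^{-1}\;\longrightarrow\;J_Pf(x_0)\quad\text{in }\mathrm{GL}(d).
\]
Because the action of $\mathrm{GL}(d)$ on probability measures of $\mathbb{P}(\R^d)$ by push-forward is continuous in the weak-$\ast$ topology, and $(J_Pf(x_n))_\ast\nu=\nu$ for all $n$, taking the weak-$\ast$ limit yields $(J_Pf(x_0))_\ast\nu=\nu$, which is the desired conclusion.

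The main subtlety—rather than obstacle—is the density-point accounting: one must be careful that $P$ is only measurable, so pointwise continuity does not make sense a priori. The Lebesgue/density-point upgrade of Lusin's theorem used in the definition of $E$ is exactly what is needed, and the only extra ingredient is that $f$, being a $C^1$-diffeomorphism, preserves density points under preimages, which is why the hypothesis $x_0\in E\cap f^{-1}(E)$ is natural.
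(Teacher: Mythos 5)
Your proposal is correct and follows essentially the same route as the paper: reduce to the a.e.\ identity $J_Pf(x)\nu=\nu$, pass to a Lusin level $E_j$ on which $P$ is continuous and of which both $x_0$ and $f(x_0)$ are density points, and then upgrade to the pointwise statement at $x_0$ by continuity of $J_Pf$ on $E_j\cap f^{-1}(E_j)$. The only difference is one of detail — you make the density-point bookkeeping and the weak-$*$ limit explicit, while the paper states them in a single sentence.
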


\begin{proof}
The assumption implies that $J_Pf(x)\nu=\nu$ for $m$-almost every
$x\in M$.   In particular this equality holds for $m$-almost every
$x$ in  $E_j$. Take $j$  large enough so that $x_0$ is a density
point of $E_j\cap f^{-1}(E_j)$. Since $J_Pf$ is continuous on this
set, the equality holds for $x=x_0$.
\end{proof}

Now, for $\alpha=(a,a',b,b')$ in $E^4$ with $a\not=b$ we consider
the set
$$\mathcal{D}_\alpha=\{ f \in \mbox{Diff}_m^r(M): \, f(a)=a' \mbox{ and } f(b)=b' \}.$$
We defined also the application
$$
\Phi: \mathcal{D}_\alpha \to \mathrm{SL}^\pm(d)\times
\mathrm{SL}^\pm(d), \qquad  \Phi(f)=(J_Pf(a),J_Pf(b)).
$$
The application $\Phi$ is indeed valued in $\mathrm{SL}^\pm(d)\times \mathrm{SL}^\pm(d)$ because $P\in
\mathscr{S}$. We have the following properties.

\begin{lem} \label{lem-claim}
Consider $f\in \mathcal{D}_\alpha$ and let $\mathcal{V}$ be a
neighborhood of $f$ in $\mathrm{Diff}^r_m(M)$. Then
$\Phi(\mathcal{V}\cap \mathcal{D_\alpha})$ has non-empty interior
and  $\Phi(\mathcal{V}\cap \mathcal{D}_\alpha \setminus
\mathcal{R}_P)$ has empty interior.
\end{lem}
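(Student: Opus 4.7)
The plan is to handle the two assertions of Lemma~\ref{lem-claim} separately. For the first — that $\Phi(\mathcal V \cap \mathcal D_\alpha)$ has non-empty interior — I would fix any $f\in\mathcal V\cap\mathcal D_\alpha$ and construct a family of local conservative perturbations of the form $\tilde f=f\circ g$, where $g$ is a conservative $C^r$-diffeomorphism supported in the union of two small disjoint neighborhoods $U_a$ of $a$ and $U_b$ of $b$, both chosen small enough to avoid $\{a',b'\}$, with $g(a)=a$ and $g(b)=b$, and with prescribed derivatives $L_a$ and $L_b$ lying in neighborhoods of the identity inside the groups of volume-preserving linear maps of $T_aM$ and $T_bM$. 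Such $g$ can be produced by a standard construction: in local charts centered at $a$ and $b$, one takes the time-one map of a compactly supported divergence-free vector field whose linearization at the origin is prescribed. Then $\tilde f\in\mathcal D_\alpha$ is $C^r$-close to $f$ when $g$ is close to the identity, and
$$
\Phi(\tilde f)=\bigl(J_Pf(a)\cdot P(a)L_aP(a)^{-1},\; J_Pf(b)\cdot P(b)L_bP(b)^{-1}\bigr).
$$
Because left multiplication and conjugation are local diffeomorphisms of $\mathrm{SL}^\pm(d)$, as $(L_a,L_b)$ varies the right-hand side sweeps out a full neighborhood of $\Phi(f)$ in $\mathrm{SL}^\pm(d)\times\mathrm{SL}^\pm(d)$.

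For the second assertion the key reduction is furnished by Lemma~\ref{pseudo-cont}. Since $\alpha=(a,a',b,b')\in E^4$ with $f(a)=a'$ and $f(b)=b'$, both $a$ and $b$ lie in $E\cap f^{-1}(E)$ for every $f\in\mathcal D_\alpha$. Consequently, any $f\in\mathcal D_\alpha\setminus\mathcal R_P$, which by definition admits an invariant measure of the form $m\times\nu$ for the cocycle $(f,J_Pf)$, must satisfy $J_Pf(a)\nu=\nu$ and $J_Pf(b)\nu=\nu$. Therefore
$$
\Phi(\mathcal V\cap\mathcal D_\alpha\setminus\mathcal R_P)\subseteq \Sigma\eqdef\bigl\{(A,B)\in\mathrm{SL}^\pm(d)^2:A\nu=B\nu=\nu\text{ for some Borel probability } \nu \text{ on }\mathbb P(\R^d)\bigr\},
$$
and it suffices to show that $\Sigma$ has empty interior.

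The hard part — the only non-routine step — is exactly this last claim. My plan is to approximate an arbitrary $(A_0,B_0)$ by a pair $(A,B)$ in which each of $A$ and $B$ has real eigenvalues of pairwise distinct moduli and such that the finite set of eigendirections of $A$ in $\mathbb P(\R^d)$ is disjoint from that of $B$. Both conditions are open and dense in $\mathrm{SL}^\pm(d)^2$, the second making genuine use of the fact that $A$ and $B$ are perturbed independently — which is exactly where the assumption $a\neq b$ comes in. For such a hyperbolic $A$, every $[v]\in\mathbb P(\R^d)$ satisfies $A^n[v]\to[e_{k(v)}]$ as $n\to+\infty$, where $[e_{k(v)}]$ is the eigendirection corresponding to the first non-zero coordinate of $v$ in the eigenbasis; applying dominated convergence to $A^n_*\nu=\nu$ forces any $A$-invariant Borel probability measure to be supported on the finite set of eigendirections of $A$, and analogously for $B$. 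Hence a common invariant probability must be supported on the intersection of the two eigendirection sets, which is empty. This shows $(A,B)\notin\Sigma$ and gives density of $\mathrm{SL}^\pm(d)^2\setminus\Sigma$, finishing the plan.
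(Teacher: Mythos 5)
Your treatment of the first assertion and the reduction step for the second match the paper's approach. For the first, you build $\tilde f=f\circ g$ with $g$ a conservative diffeomorphism fixing $a,b$ and having prescribed volume-preserving linearizations there; the paper phrases this as precomposing with $u\in\mathcal{V}_0\cap\mathcal{D}_0$ and writing $\Phi(g)=(J_Pf(a)P(a)Du(a)P(a)^{-1},J_Pf(b)P(b)Du(b)P(b)^{-1})$, which is the same computation. Your extra detail on realizing a prescribed derivative via a time-one map of a compactly supported divergence-free vector field is a correct way to justify the step the paper takes for granted. For the second, your use of Lemma~\ref{pseudo-cont} to get $J_Pf(a)\nu=J_Pf(b)\nu=\nu$ and the resulting inclusion of $\Phi(\mathcal{V}\cap\mathcal{D}_\alpha\setminus\mathcal{R}_P)$ in $\Sigma$ is exactly the paper's reduction. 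The difference is that the paper then simply cites \cite[Prop.~8.14, Rem.~8.15]{ASV13} (and \cite[Sec.~7.4.2]{Via14}) for ``$\Sigma$ has empty interior,'' whereas you attempt to prove it.

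Your attempted proof of that final step has a genuine gap. You claim that the condition ``each of $A$ and $B$ has real eigenvalues of pairwise distinct moduli'' is open and dense in $\mathrm{SL}^\pm(d)^2$. It is open, but it is \emph{not} dense. Already for $d=2$: the matrix $R=\begin{psmallmatrix}0&-1\\1&0\end{psmallmatrix}\in\mathrm{SL}(2)$ has $\operatorname{tr}R=0$, and every $A\in\mathrm{SL}(2)$ with $|\operatorname{tr}A|<2$ (an open set) has a pair of non-real conjugate eigenvalues, so $R$ cannot be approximated by matrices with real spectrum. The same phenomenon persists for all $d\geq2$ (embed a rotation block). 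Since the derivatives produced in the perturbation have determinant $1$, the image of $\Phi$ really does meet this elliptic open set, so you cannot restrict attention to the hyperbolic locus. Thus your argument only shows $\Sigma$ has empty interior inside the (non-dense) open set of real-hyperbolic pairs, not in $\mathrm{SL}^\pm(d)^2$. A correct direct proof (for $d=2$, say) must also handle elliptic perturbations, e.g.\ by arranging $A$ and $B$ elliptic with irrational rotation numbers and distinct (absolutely continuous) unique invariant measures, and treating the mixed hyperbolic/elliptic case where atomic and non-atomic invariant measures can never coincide; this is essentially the content of the cited result, and it is more delicate than the ``make everything hyperbolic'' scheme you propose.
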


\begin{proof}
We write $g=f\circ u$ for the elements of $\mathcal{V}\cap
\mathcal{D}_\alpha$. The map $u$ runs over the set
$\mathcal{V}_0\cap \mathcal{D}_0$ where $\mathcal{V}_0$ is some
neighborhood of the identity in $\mathrm{Diff}^r_m(M)$ and
$$
\mathcal{D}_0=\{u\in \mbox{Diff}_m^r(M): \, u(a)=a \ \text{and} \
u(b)=b\}.
$$
A
simple computation gives
$$\Phi(g)=(J_Pf(a)P(a)Du(a)P(a)^{-1},J_Pf(b)P(b)Du(b)P(b)^{-1}).$$
Now, since $\mathrm{SL}^\pm(d)$ is a normal subgroup of
$\mathrm{GL}(d)$ and $a\not = b$,  the range of the map
$$u \in \mathcal{V}_0\cap
\mathcal{D}_0 \mapsto (P(a)Du(a)P(a)^{-1},P(b)Du(b)P(b)^{-1})$$
contains an open ball in $\mathrm{SL}^\pm(d)\times
\mathrm{SL}^\pm(d)$. Hence, the range of $\Phi$ restricted to
$\mathcal{V}\cap \mathcal{D}_\alpha$ also contains an open set.
This concludes the first item.

We will prove now that $\Phi(\mathcal{V}\cap
\mathcal{D}_\alpha\setminus \mathcal{R}_P)$ has empty interior in
$\mathrm{SL}^\pm(d)\times \mathrm{SL}^\pm(d)$. Indeed, if $f$
belongs to the complement of $\mathcal{R}_P$, then $(f,J_Pf)$ has
an invariant measure of the form $m\times \nu$. Moreover, if $f\in
\mathcal{D}_\alpha$, then, by Lemma~\ref{pseudo-cont}, we have
that $J_Pf(a)\nu=J_Pf(b)\nu=\nu$. In particular
$\Phi(\mathcal{V}\cap\mathcal{D}_\alpha\setminus \mathcal{R}_P)$
is included in the set of pair of matrices $(A,B)$ in $
\mathrm{SL}^\pm(d)\times \mathrm{SL}^\pm(d)$ such that there is a
probability measure $\nu$ on $\mathbb{P}(\R^d)$ satisfying that
$A\nu=B\nu=\nu$. From~\cite[Prop.~8.14 and Rem.~8.15]{ASV13} (see
also~\cite[Sec.~7.4.2]{Via14}) this set has empty interior in
$\mathrm{SL}^\pm(d)\times \mathrm{SL}^\pm(d)$  and consequently
$\Phi(\mathcal{V}\cap\mathcal{D}_\alpha\setminus \mathcal{R}_P)$
also has it. This completes the proof.
\end{proof}

 Using the above lemma, we can prove the density of $\mathcal{R}_P$:

 \begin{proof}[Proof of Proposition~\ref{step3}]
 Let $\mathcal{V}$ be any open ball of
 $\mathrm{Diff}_m^r(M)$. Consider $f\in \mathcal{V}$
 and take any pair of different points $a$, $b$ in $E\cap f^{-1}(E)$.
 Setting $\alpha=(a,f(a),b,f(b))$ we have that
 $\mathcal{V}\cap \mathcal{D}_\alpha$ is not empty (it contains $f$).
 From Lemma~\ref{lem-claim}, we obtain that $\mathcal{V}\cap \mathcal{D}_\alpha$
 cannot be included in $\mathcal{D}_\alpha\setminus \mathcal{R}_P$.
 Hence $\mathcal{V}$ must intersect $\mathcal{R}_P$.
 Thus $\mathcal{R}_P$ is dense. As we previously mentioned,
 $\mathcal{R}_P$ is also open and then we  conclude the proof
 of the proposition.
\end{proof}

\subsection{Proof of Theorem~\ref{main}}
Let $f_1,\ldots,f_{k-1}$ be $C^r$-diffeomorphisms of $M$
preserving $m$ and such that its group action is ergodic.
Consider $Q\in\mathscr{O}$ and take the random cocycle
$(\bar{f}^*,J_Q\bar{f}^*)$ defined by these $k-1$ maps. According
to Theorem~\ref{main-cocycle}, we find a random  cocycle
$(\bar{f}^*,B)$ cohomologous to $(\bar{f}^*,J_Q\bar{f}^*)$ such
that every $(\bar{f}^*,B)$-invariant measure on $M\times
\mathbb{P}(\R^d)$ is a product measure. Moreover, according to
Remark~\ref{rem-final}, there is $R:M\to \mathrm{SL}^\pm(d)$ such
that $ B_i(x)=R(f_i(x))^{-1}J_Qf_i(x)\,R(x)$ for $m$-almost every
$x\in M$ and every $i=1,\dots,k-1$. In particular, taking
$P(x)=R(x)^{-1}Q(x)\in \mathscr{S}$ we have that
$B_i(x)=J_{P}f_i(x)$. Thus, $B=J_{P}\bar{f}^*$.

By Proposition~\ref{step3}, one can find a dense open set
$\mathcal{R}_P$ such that $(f,J_Pf)$ has no invariant measure on
$M\times \mathrm{P}(\R^d)$ of the form $m\times\nu$. In
consequence, for every $f_k$ in $\mathcal{R}_P$ the maps
$(f_i,J_Pf_i)$ for $i=1,\dots,k$ have not a common invariant
probability measure projecting on $m$  of the form $m\times\nu$.
Moreover, the random cocycle $(\bar{f},J_P\bar{f})$ defined by
theses $k$ maps does not admit any invariant probability measure.
Indeed, if $\hat\mu$ is a $(\bar{f},J_P\bar{f})$-invariant
probability measure also it is
$(\bar{f}^*,J_P\bar{f}^*)$-invariant. Consequently $\hat\mu=m
\times \nu$ and thus $(f_k,J_Pf_k)$ has a product invariant
measure which is not possible. Hence, according to
Proposition~\ref{step2}, $\lambda_+(f_1,\ldots,f_k)\not =
\lambda_+(f_1,\dots,f_k)$. This concludes Theorem~\ref{main}.

\section{Invariance principle: Proof of Theorem~\ref{thm3}}
\label{sec:invariance} Let $(\bar{f},A)$ be a random
$\mathrm{GL}(d)$-valued cocycle. Suppose that $\log^+ \|A^\pm\|$
are both $\bar{\mu}$-integrable functions
 and recall the definition of
the extremal Lyapunov exponents $\lambda_\pm(A)$ given in the
introduction.

\begin{prop}\label{step1}
    Let $(\bar{f},A)$ be a random cocycle as above and assume that
    $\lambda_-(A)=\lambda_+(A)$. Then there exists a
    $(\bar{f},A)$-invariant
    probability measure $\hat{\mu}$ on $X\times \mathbb{P}(\R^d)$.
\end{prop}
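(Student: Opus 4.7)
The plan is to proceed in two steps: first exhibit an $(\bar{f},A)$-stationary measure on $X\times \mathbb{P}(\mathbb{R}^d)$, and then upgrade stationarity to invariance by invoking Ledrappier's invariance principle, which is precisely the tool designed to trigger under the coincidence hypothesis $\lambda_-(A)=\lambda_+(A)$.

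For the first step, I would consider the set $\mathcal{M}_\mu$ of Borel probability measures on $X\times \mathbb{P}(\mathbb{R}^d)$ whose projection on $X$ equals $\mu$. Since $\mathbb{P}(\mathbb{R}^d)$ is compact, $\mathcal{M}_\mu$ is non-empty, convex, and weak-$*$ compact. The $\mu$-invariance of each $f_i$ (which follows from the $\bar{f}$-invariance of $\bar{\mu}$, as noted in the introduction) ensures that the averaging operator $\hat{\mu}\mapsto \hat{\mu}\ast p=\int_Y F_i\hat{\mu}\,dp(i)$ maps $\mathcal{M}_\mu$ into itself, and it is weak-$*$ continuous. A Markov--Kakutani fixed point argument (or equivalently Ces\`aro-averaging an initial measure and extracting a weak-$*$ limit point) then produces a fixed point $\hat{\mu}_0\in\mathcal{M}_\mu$, which is by definition an $(\bar{f},A)$-stationary measure.

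For the second step, I would invoke Ledrappier's invariance principle in the form valid for random linear cocycles (cf.~\cite{Le86,C90} and the textbook account in~\cite{Via14}). Given the integrability of $\log^+\|A^{\pm 1}\|$ and the equality $\lambda_-(A)=\lambda_+(A)$, this principle ensures that every stationary measure on $X\times\mathbb{P}(\mathbb{R}^d)$ projecting on $\mu$ is in fact $F_i$-invariant for $p$-almost every $i\in Y$. Concretely, after disintegrating $\hat{\mu}_0=\int \delta_x\otimes \nu_x\,d\mu(x)$ along the fibers over $X$, one considers the additive cocycle $\log \|A^n(\bar{x})v\|$, normalizes by $n$, and uses a (sub/super-)martingale argument combined with Fatou's lemma to show that when the two extremal exponents collapse to a single value the fiberwise measures $(\nu_x)$ must satisfy $A_i(x)\nu_x=\nu_{f_i(x)}$ for $p$-a.e.~$i$ and $\mu$-a.e.~$x$. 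This is exactly the $F_i$-invariance required, so $\hat{\mu}_0$ is an $(\bar{f},A)$-invariant measure and the proposition follows.

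The main obstacle will be the second step: the classical formulation of Ledrappier's invariance principle is stated over a deterministic base, so one must verify that it transfers to the present setting in which the base dynamics is itself the skew-product $\bar{f}$, and in particular ensure that the disintegration is measurable with respect to $X$ (rather than the larger space $\bar{X}$) as demanded by our definition of a random cocycle. The integrability hypothesis together with the locally constant nature of $A$ (it depends only on $\omega_0$) should make this transcription essentially automatic, but some care is needed to set up the correct conditional expectations along the $\omega$-fibers before invoking the martingale convergence step.
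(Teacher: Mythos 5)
Your proposal takes essentially the same route as the paper: the stationary measure is produced via Ces\`aro averaging and compactness of $\mathbb{P}(\mathbb{R}^d)$ (this is Proposition~\ref{prop-anex1}), and the upgrade from stationarity to invariance is Ledrappier's non-invertible invariance principle applied to the cocycle over the one-sided skew-shift $\bar{f}_+$ on $\bar{X}_+=\Omega_+\times X$, exploiting that the disintegration $\nu_{\bar{x}}=\nu_x$ of $\eta_+=\mathbb{P}_+\times\hat{\mu}$ depends only on the $X$-coordinate. One clarification to your closing concern: Ledrappier's theorem is not restricted to a deterministic base---it applies to any measurable cocycle over an ergodic transformation, so the transfer you flag as an obstacle is immediate once $\hat{\mu}$ is lifted to the $F_+$-invariant measure $\mathbb{P}_+\times\hat{\mu}$ on $\bar{X}_+\times\mathbb{P}(\mathbb{R}^d)$ and one observes the $\omega$-independence of the conditional measures.
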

\begin{proof}
Since $\mathbb{P}(\mathbb{R}^d)$ is a compact metric space, by
Proposition~\ref{prop-anex1} in Appendix~\ref{Apendix1} we can
take a $(\bar{f},A)$-stationary probability measure $\hat\mu$ on
$X\times \mathbb{P}(\mathbb{R}^d)$. We are going to prove that
under the assumption $\lambda_-(A)=\lambda_+(A)$, the probability
$\hat{\mu}$ is in fact invariant. This measure have some
disintegration $d\hat{\mu}=\nu_x \, d\mu(x)$. We want to prove
that this measure is invariant which is equivalent to say that
$\nu_{f_i(x)}=A_i(x)\nu_x$ for $\mu$-almost every $x$ and
$p$-almost every $i\in Y$.

Let us define the non-invertible skew-shift associated with the
random dynamics. We set $\Omega_+=Y^\N$, $\mathbb{P}_+=p^\N$ and
$\theta:\Omega_+\rightarrow\Omega_+$ the shift map. Then we set
$\bar{X}_+=\Omega_+\times X$, $\bar{\mu}_+=\mathbb{P}_+\times \mu$,
and $\bar{f}_+:\bar{X}_+\rightarrow \bar{X}_+$ given by
$\bar{x}=(\omega,x)\mapsto (\theta\omega,f_{i}(x))$ where
$\omega=(\omega_n)_{n\geq 0}$ and $\omega_0=i$. The triplet
$(\bar{X}_+,\bar{\mu}_+,\bar{f}_+)$ is a non-invertible ergodic
measure preserving dynamical system. Observe that since
$A:\bar{X}\to \mathrm{GL}(d)$ is locally constant then  it also
define a $\mathrm{GL}(d)$-valued cocycle over this non-invertible
system which we simply denote by $(\bar{f}_+,A)$.  Notice that, by
the assumption, the extremal Lyapunov exponents of this new linear
cocycle are also coincident.

Now we use the non-invertible version of the invariance principle
of Ledrappier~\cite[Prop.~2 and Thm.~3]{Le86} (see
also~\cite[Thm.~7.2]{Via14}). First, we consider the action of
this cocycle $(\bar{f}_+,A)$ on $\bar{X}_+\times
\mathbb{P}(\R^d)$. Then, since $\hat\mu$ is an
$(\bar{f},A)$-stationary measure then the measure
$\eta_+=\mathbb{P}_+\times \hat{\mu}_+$ is invariant for this action
(see~\cite{LM06}). Its projection on $\bar{X}_+$ is $\bar{\mu}_+$,
and the corresponding disintegration is $d\eta_+=
\nu_{\bar{x}}\,d\bar{\mu}(\bar{x})$ where $\nu_{\bar{x}}=\nu_x$
only depends on the second coordinate of $\bar{x}=(\omega,x)\in
\Omega_+ \times X=\bar{X}_+$. The invariance principle of
Ledrappier says that coincidence of the extremal Lyapunov
exponents implies the equality
$\nu_{\bar{f}_+(\bar{x})}=A(\bar{x})\nu_{\bar{x}}$ for
$\bar{\mu}_+$-almost every $\bar{x}=(\omega,x)\in \bar{X}_+$. This
is equivalent to say that $\nu_{f_i(x)}=A_i(x)\nu_x$ for
$\mu$-almost every $x\in X$ and $p$-almost every $i\in Y$. This
concludes the proof.
\end{proof}

As a corollary of the above proposition and
Theorem~\ref{main-cocycle} we get Theorem~\ref{thm3}:

\begin{proof}[Proof of Theorem~\ref{thm3}] If $(\bar{f},A)$ is a
random $\mathrm{GL}(d)$-valued cocycle satisfying the
integrability conditions and  $\lambda_-(A)=\lambda_+(A)$,
according to Proposition~\ref{step1} we get a
$(\bar{f},A)$-invariant probability measure $\hat\mu$ on
$X\times\mathbb{P}(\mathbb{R}^d)$. From
Theorem~\ref{main-cocycle}, there is a random linear cocycle
$(\bar{f},B)$ cohomologous to $(\bar{f}, A)$ such that every
$(\bar{f},B)$-invariant probability measure is a product measure.
In particular, the $(\bar{f},A)$-invariant measure $\hat\mu$ is
transported to a $(\bar{f},B)$-invariant measure $\hat\mu_1$. Then
we get that $\hat\mu_1=\mu\times \nu$ where $\nu$ is a probability
measure on $\mathbb{P}(\R^d)$. Consequently $B_i(x)\nu=\nu$ for
$p$-almost every $i\in Y$ and $\mu$-almost $x\in X$. This
completes the proof.
\end{proof}


\section{Essentially bounded cocycles} \label{sec:bounded}

In this section we are going to study the invariant measures of
essentially bounded random cocycles. First, we prove
Theorem~\ref{thm:measure-product-group} after that we get
Theorem~\ref{thm:equiv} and finally we classify the invariant
measures of essentially bounded random linear cocycles.

\subsection{Proof of Theorem~\ref{thm:measure-product-group}}
We will split the proof in three steps.

\subsubsection{Step 1:~Existence of stationary measure} First we
will study  the existence of stationary measures for essentially
bounded random cocycle.

\begin{prop} \label{medida}
    Let $(\bar{f},A)$ be an essentially bounded random
    $\mathrm{G}$-valued cocycle.
    Then there exists a $(\bar{f},A)$-stationary Borel probability
    regular measure $\hat{\mu}$ on $X\times \mathrm{G}$.
\end{prop}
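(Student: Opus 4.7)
The plan is to apply a Krylov--Bogolyubov-type averaging argument, using the essentially bounded hypothesis to force the limit measure to be nonzero, and the ergodicity of $\bar{f}$ to upgrade it to a stationary \emph{probability} measure projecting on $\mu$. The main technical twist is that, because $A$ is only measurable, one cannot pass to a weak-$*$ limit of Cesàro averages in the usual way; instead I would bake stationarity in by hand via a Banach limit.

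Concretely, for $\phi\in C_c(X\times\mathrm{G})$ I set
$$
   \hat{\nu}(\phi)\eqdef \mathrm{Lim}_N\ \frac{1}{N}\sum_{n=0}^{N-1}\int_{\bar{X}_+}\phi\bigl(f_\omega^n(x),\,A^n(\omega,x)\bigr)\,d\bar{\mu}_+(\omega,x),
$$
where $\mathrm{Lim}$ denotes a Banach limit. This is a positive linear functional bounded by $\|\phi\|_\infty$, so by the Riesz representation theorem it extends to a regular Radon measure $\hat{\nu}$ on $X\times\mathrm{G}$ of total mass $\leq 1$.

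Next I verify stationarity and nontriviality. For stationarity, the independence of $\omega_n$ from $(\omega_0,\dots,\omega_{n-1})$ gives the exact identity
$$
   \int (T\phi)\circ\Phi_n\,d\bar{\mu}_+ \;=\; \int \phi\circ\Phi_{n+1}\,d\bar{\mu}_+,
$$
with $\Phi_n(\omega,x)=(f^n_\omega(x),A^n(\omega,x))$ and $T\phi(x,M)=\sum_i p_i\phi(f_i(x),A_i(x)M)$; a Cesàro shift leaves boundary terms of size $O(1/N)$ which the Banach limit kills, so $\hat{\nu}(T\phi)=\hat{\nu}(\phi)$ for every $\phi\in C_c$. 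For nontriviality, pick $\phi_K\in C_c(X\times\mathrm{G})$ with $\phi_K\ge \mathbf{1}_{X\times K}$; Fatou applied to the density hypothesis on the set $E$ gives
$$
   \hat{\nu}(\phi_K)\;\ge\;\mathrm{Lim}_N\frac{1}{N}\sum_{n=0}^{N-1}\bar{\mu}_+\{A^n\in K\}\;\ge\;\delta\,\bar{\mu}(E)\;>\;0.
$$

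Finally I upgrade $\hat{\nu}$ to a stationary probability measure projecting on $\mu$. Approximating $\mathbf{1}_{\mathrm{G}}$ from below by $C_c$ functions and using that each $f_\omega^n$ preserves $\mu$ shows $\pi_X\hat{\nu}\le\mu$, and stationarity of $\hat{\nu}$ descends to $\sum_i p_i(f_i)_*$-stationarity of $\pi_X\hat{\nu}$. Ergodicity of $(\bar{X},\bar{\mu},\bar{f})$ forces this absolutely continuous stationary sub-probability measure to equal $\alpha\mu$ for some scalar $\alpha$, via a Birkhoff argument on the backward Markov chain. The nontriviality bound gives $\alpha\ge \delta\,\bar{\mu}(E)>0$, so $\hat{\mu}\eqdef \hat{\nu}/\alpha$ is the desired stationary Borel probability measure on $X\times\mathrm{G}$ with $\pi_X\hat{\mu}=\mu$; regularity is automatic since the ambient space is Polish. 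The main obstacle is the measurability-only regularity of $A$, which forbids weak-$*$ continuity arguments for the Markov operator; the Banach-limit construction bypasses this by extracting stationarity purely from the combinatorial identities provided by the Bernoulli product structure of $\bar{\mu}$.
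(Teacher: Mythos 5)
Your argument is correct and follows the same overall strategy as the paper's (Krylov--Bogolyubov averaging of the push-forwards of $\mu\times\delta_e$, essential boundedness to keep the Ces\`aro averages from escaping to infinity, ergodicity of $\mu$ to force the $X$-marginal of the limit to be proportional to $\mu$), but your functional-analytic implementation is genuinely different. You correctly diagnose that the Markov operator $T$ does not preserve $C_b(X\times\mathrm{G})$ because $A$ is only measurable in $x$, and you bypass this by using a Banach limit on the sequence of test-function integrals together with Riesz--Markov, extracting stationarity by hand from the exact identity $\int (T\phi)\circ\Phi_n\,d\bar\mu_+ = \int\phi\circ\Phi_{n+1}\,d\bar\mu_+$ and the $O(1/N)$ boundary telescoping. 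The paper avoids the same obstruction by a different choice of duality: in Appendix~\ref{Apendix1} it works with the adjoint transfer operator $\mathscr{P}^*$ on $L^\infty(X;\mathscr{M}(\mathrm{G}))$, which is the dual of $L^1(X;C_0(\mathrm{G}))$; because the $X$-variable is handled in $L^1$ rather than $C_0$, no continuity in $x$ is needed, $\mathscr{P}^*$ is weak$^*$-to-weak$^*$ continuous as an adjoint, and the ordinary Krylov--Bogolyubov argument goes through via Banach--Alaoglu and separability of the predual (Lemma~\ref{lem:Apendix1}), yielding a disintegrated stationary family $\nu_x$ directly. Your Banach-limit route is more elementary and perfectly serviceable; the paper's predual trick is a bit slicker in that stationarity of the limit is automatic rather than checked by hand. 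Two small points you should make explicit: Riesz--Markov requires you to fix a compatible locally compact Polish topology on the standard Borel space $X$ (possible since such a space is Borel-isomorphic to $[0,1]$ or a countable discrete set), and in the last step, once you have $\pi_X\hat\nu\le\mu$ with stationary density $\phi=\sum_i p_i\,\phi\circ f_i^{-1}$, constancy of $\phi$ follows from the equality case of Cauchy--Schwarz (forcing $\phi\circ f_i^{-1}=\phi$ for each $i$) and ergodicity of $\mu$ under the generated group -- worth writing, since the Birkhoff claim as stated is a little loose.
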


\begin{proof}
Set $\delta_e$ be the probability measure on $\mathrm{G}$
supported on the identity element $e$.  Let us consider the
sequence $(\hat\nu^n)_n$ of probability measures $\hat\nu^{n}$ on
$X\times \mathrm{G}$ defined by means of the disintegration
$d\hat{\nu}^n = \nu^n_x \, d\mu(x)$ where $\nu^n: x \mapsto
\nu^n_x$ is given by
$$
  \nu^{n} = \frac{1}{n} \sum_{j=0}^{n-1}
  \mathscr{P}^{*j}\delta_{e} \in
  L^\infty(X;\mathscr{P}(\mathrm{G}))
$$
being $\mathscr{P}^*$ the adjoint transfer operator on
$L^\infty(X;\mathscr{M}(\mathrm{G}))$ introduced in
Appendix~\ref{Apendix1}. According to Lemma~\ref{lem:Apendix1},
there exists  a finite regular Borel $(\bar{f},A)$-stationary
measure $\hat\nu$ on $X\times G$ which is an accumulation point in
the weak$^*$ topology of $(\hat\nu^n)_n$. Notice
    that the mass of the probability measures $\hat{\nu}_{n}$
    could be escaping to
    infinite and thus $\hat{\nu}$ could be zero. However,
    this is not the case, since $(\bar{f},A)$ is essentially bounded.
    Indeed, there are a compact set $K$ of $\mathrm{G}$ and
    a constant $\delta>0$  satisfying that
    $D_K(\bar{x})=\{m\geq 0: A^m(\bar{x})\in  K\}$ has $\delta$ lower
 density on a set $E$ of positive $\bar{\mu}$-measure.
Then for any $n\geq 0$,
    \begin{align*}
    \hat{\nu}^n(X\times K) &= \int  \nu^n_x(K) \, d\mu(x) =
 \frac{1}{n} \sum_{j=0}^{n-1} \int \int A^{-j}(\omega,x)^{-1}\delta_e(K) \,
 d\mathbb{P}(\omega)d\mu(x)
    \\
&=\frac{1}{n} \sum_{j=0}^{n-1} \int  A^{j}(\bar{f}^{-j}(\bar
x))\delta_e(K) \,
 d\bar\mu(\bar x) = \frac{1}{n} \sum_{j=0}^{n-1} \int  A^{j}(\bar{y})\delta_e(K) \,
 d\bar\mu(\bar y)
     \\
    &= \int \frac{\#\,\{0\leq j\leq n-1: A^{j}(\bar{y}) \in K \}}{n} \,
    d\bar{\mu}(\bar{y}) \\
    &= \int \frac{\#\, ([0,n-1]\cap
        D_K(\bar{y}))}{n} \, d\bar{\mu}(\bar{y}),
    \end{align*}
so that by Fatou lemma
$$\hat\nu(X\times K)=\liminf_{n\to +\infty} \hat{\nu}^n(X\times K)\geq \int \liminf_{n\to +\infty} \frac{\#\, ([0,n-1]\cap
        D_K(\bar{y}))}{n} \, d\bar{\mu}(\bar{y})\geq \delta\bar{\mu}(E),$$
    hence $\hat\nu$ is not
    equal to zero. Therefore, we get that the normalized measure
    is a probability measure.

    Finally, we will prove that the normalized measure of $\hat\nu$
    projects on $\mu$. To see this, first we observe that $\pi \hat\nu
    (B)  \leq \mu(B)$ for all measurable set
    $B\subset X$ where $\pi$ is the projection onto $X$. Indeed,  
    let $E\subset B \subset V$ be, respectively, a compact set and an
    open set of $X$ approximating the $\mu$-measure of $B$. Hence
    \begin{align*}
    \hat\nu(E\times \mathrm{G}) &\leq \hat\nu(V\times \mathrm{G}) \leq
    \liminf_{n\to\infty} \hat{\nu}^n(V\times \mathrm{G}) =
    \mu(V).
    \end{align*}
    The second inequality holds from the
    weak$^*$ convergence~(c.f.~\cite[Thm.~1 Sec.~1.9]{EG}). Thus,
    since  both, $\hat\nu$ and   $\mu$, are regular measures we get
    that
    \begin{align*}
    \pi\hat\nu(B)=\hat\nu(B\times\mathrm{G})&=\sup
    \{ \, \hat\nu(E\times K) : \ \text{$E$ and $K$ compact and
        $E \times K  \subset B\times \mathrm{G}$}
    \} \\
    &\leq \inf \{ \, \mu(V) : \ \text{$V$ open  and $B  \subset V$} \}=\mu(B).
    \end{align*}
    Therefore $\pi\hat\nu$ is absolute continuous with respect to
    $\mu$.
    Also $\pi\hat\nu$ is $(\theta,f)$-stationary since $\pi\circ F_i = f_i\circ \pi$
    for all $i\in Y$ where recall that $F_i\equiv(f_i,A_i):X\times \mathrm{G}\to X\times \mathrm{G}$.
    Moreover, $\pi\hat\nu$ is proportional to $\mu$. Indeed,
    according
    to Radon-Nikodym theorem
    (see~\cite[thm.~13.18]{guide2006infinite})
    we can
    writing $d\pi
    \hat\nu=\phi(x)\,d\mu(x)$ with $\phi\in L^1(\mu)$.
    The ergodicity
    of $\bar{\mu}$ implies that $\phi$ is constant $\mu$-almost everywhere: indeed, by Birhoff theorem, for any bounded measurable $u:X\rightarrow \R$,
    $$\int u(x) \phi(x) d\mu(x)=\int\left(\frac{1}{n}\sum_{j=0}^{n-1}u(\bar{f}^n(\omega,x))\right)\phi(x)d\bar{\mu}(\omega,x)\xrightarrow[n\to+\infty]{}\left(\int u d\mu\right)\left(\int \phi d\mu\right),$$
    so that we obtain in fact that $\phi(x)=\pi\hat\nu(X)=\int \phi d\mu$ for $\mu$-almost
    every $x\in X$. Therefore, we get that the normalized
    measure of $\hat\nu$ is a $(\bar{f},A)$-stationary Borel probability
    regular measure 
    and the proof of the proposition is completed.
\end{proof}

\subsubsection{Step 2:~Reduction of the cocycle} Consider $h\in
\mathrm{G}$ and define
$$
\Phi_h: X\times \mathrm{G} \to X\times \mathrm{G}
\quad \Phi_h(x,g)=(x,gh).
$$
\begin{prop} \label{lem-reduction}
    Let $\hat\pi$ be a $(\bar{f},A)$-stationary ergodic Borel probability regular measure
    on $X\times \mathrm{G}$. 
    Then,
    $\mathrm{H}(\hat{\pi})=\{h\in \mathrm{G}:
    \Phi_h\hat\pi=\hat\pi\}$   is a compact subgroup of $\mathrm{G}$ and there exists a
    measurable function $P: X \to \mathrm{G}$  such that
    $$
    \text{$B(\omega,x)=P(f_\omega(x))^{-1}A(\omega,x)P(x) \in
        \mathrm{H}(\hat\pi)$ \quad for $(\mathbb{P}\times\mu)$-almost
        every $(\omega,x)\in\Omega\times X$.}
    $$
\end{prop}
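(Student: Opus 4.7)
The plan is to prove the two conclusions in sequence: first compactness and closure of $\mathrm{H}(\hat\pi)$, then existence of the measurable conjugator $P$ by analyzing the disintegration of $\hat\pi$.

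For the structure of $\mathrm{H}(\hat\pi)$: closedness follows from weak-$\ast$ continuity of $h\mapsto\Phi_h\hat\pi$, and $\mathrm{H}(\hat\pi)$ is a subgroup via $\Phi_{h_1 h_2}=\Phi_{h_2}\circ\Phi_{h_1}$. Compactness comes from Borel regularity of $\hat\pi$: pick a compact $L\subset X\times \mathrm{G}$ with $\hat\pi(L)>2/3$. For any $h\in\mathrm{H}(\hat\pi)$, $\hat\pi(L\cap\Phi_h^{-1}L)>1/3>0$, so there exist $(x,g)\in L$ with $(x,gh)\in L$, whence $h\in K^{-1}K$ for the compact set $K:=\pi_{\mathrm{G}}(L)$. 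Thus $\mathrm{H}(\hat\pi)$ is relatively compact in $\mathrm{G}$, and being closed, compact.

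For the construction of $P$: set $\mathrm{H}=\mathrm{H}(\hat\pi)$ and disintegrate $\hat\pi=\int\pi_x\,d\mu(x)$. Then $h\in\mathrm{H}$ iff $\pi_x h=\pi_x$ for $\mu$-a.e.\ $x$, and right-$\mathrm{H}$-invariance of $\pi_x$ gives the slicing $\pi_x=\int_{\mathrm{G}/\mathrm{H}}(\delta_g * m_{\mathrm{H}})\,d\tilde\pi_x(g\mathrm{H})$, where $\tilde\pi_x$ is the pushforward of $\pi_x$ on $\mathrm{G}/\mathrm{H}$ and $m_{\mathrm{H}}$ is normalized Haar measure on $\mathrm{H}$. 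The key reduction is to show that $\tilde\pi_x$ is a Dirac $\delta_{\sigma(x)}$ for $\mu$-a.e.\ $x$; once this is granted, the Kuratowski-Ryll-Nardzewski selection theorem provides a measurable $P:X\to\mathrm{G}$ with $P(x)\mathrm{H}=\sigma(x)$, so $\pi_x=\delta_{P(x)} * m_{\mathrm{H}}$. Substituting into the stationarity identity $\pi_y=\sum_i p_i A_i(f_i^{-1}(y))\pi_{f_i^{-1}(y)}$ and comparing supports of the resulting Haar measures on distinct cosets forces $A_i(f_i^{-1}(y))P(f_i^{-1}(y))\mathrm{H}=P(y)\mathrm{H}$ for every $i$ with $p_i>0$, which is exactly $P(f_{\omega_0}(x))^{-1}A(\omega,x)P(x)\in \mathrm{H}$ $(\mathbb{P}\times\mu)$-a.e.

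The hardest step, and the main obstacle, is the Dirac claim for $\tilde\pi_x$. To address it I would lift $\hat\pi$ to its natural extension $\bar{\hat\pi}$ on $\bar X\times\mathrm{G}$: a genuine $\bar F$-invariant ergodic measure whose fibers $\bar\pi_{\bar x}$ satisfy the true equivariance $\bar\pi_{\bar f(\bar x)}=A(\bar x)\bar\pi_{\bar x}$. The right-stabilizer $\bar H_{\bar x}$ of $\bar\pi_{\bar x}$ is preserved by left translation, hence $\bar f$-invariant, hence by ergodicity $\bar\mu$-a.e.\ equal to a constant closed compact subgroup of $\mathrm{G}$, which coincides with $\mathrm{H}$ by uniqueness of the natural extension of $\hat\pi$ (applied to $\Phi_h\bar{\hat\pi}$ for $h\in\mathrm{H}(\hat\pi)$). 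If the pushforward $\widetilde{\bar\pi}_{\bar x}$ on $\mathrm{G}/\mathrm{H}$ had support of cardinality $\geq 2$ on a positive-measure set, a measurable selection of a proper subset of the support, transported along the $\bar F$-orbit via the cocycle relation, would split $\bar{\hat\pi}$ into two non-trivial $\bar F$-invariant pieces projecting to $\hat\pi$, contradicting ergodicity. So $\bar\pi_{\bar x}=\delta_{P(\bar x)} * m_{\mathrm{H}}$; a final uniqueness argument shows that the coset $P(\bar x)\mathrm{H}$ depends only on $x$, yielding the Dirac property for $\tilde\pi_x$. Carrying out the equivariant measurable selection and the $\omega$-independence of the coset is where the delicate work lies.
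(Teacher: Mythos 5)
Your proof of compactness of $\mathrm{H}(\hat\pi)$ is correct and in fact a bit slicker than the paper's: picking a compact $L$ with $\hat\pi(L)>2/3$ and using $\hat\pi(L\cap\Phi_h^{-1}L)>1/3$ to conclude $\mathrm{H}(\hat\pi)\subset K^{-1}K$ with $K=\pi_{\mathrm G}(L)$ replaces the paper's proof-by-contradiction via an infinite family of disjoint translates $\Phi_{h_n}(X\times K)$. That part is fine.

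The construction of $P$, however, has a genuine gap, and you have identified it yourself: everything hinges on showing that the pushforward $\tilde\pi_x$ of $\pi_x$ to $\mathrm{G}/\mathrm{H}$ is a Dirac mass, and your sketch does not close this. The step where you pass from ``$\tilde{\bar\pi}_{\bar x}$ has support of cardinality $\geq 2$'' to a splitting of $\bar{\hat\pi}$ requires producing an $\bar F$-\emph{equivariant} proper measurable subset of the support in each fiber; an arbitrary measurable selection of a proper subset will not be carried into itself under the cocycle relation $\tilde{\bar\pi}_{\bar f(\bar x)}=A(\bar x)\tilde{\bar\pi}_{\bar x}$, and when the support is uncountable (or consists of several atoms of equal mass) there is no canonical choice to exploit. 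The subsequent ``$\omega$-independence of the coset'' is also left unresolved. So as written the proposal establishes the group structure but not the reduction of the cocycle.

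The paper sidesteps the Dirac claim entirely with a different idea: it works directly with the set $\mathcal{E}_x$ of $g$ such that $(x,g)$ is a \emph{generic point} of the ergodic stationary measure $\hat\pi$ (in the sense of Birkhoff averages over random orbits). The commutation $\Phi_h\circ F_i=F_i\circ\Phi_h$ shows that if $(x,g)$ and $(x,gh)$ are both generic then $\Phi_h\hat\pi=\hat\pi$, i.e.\ $h\in\mathrm{H}$; thus $\mathcal{E}_x$ is contained in a single right $\mathrm{H}$-coset. A measurable selection $P(x)\in\mathcal{E}_x$ exists since $\pi_x(\mathcal{E}_x)=1$, and, because genericity is preserved along random orbits, both $A(\omega,x)P(x)$ and $P(f_\omega(x))$ lie in $\mathcal{E}_{f_\omega(x)}$ --- hence their quotient lies in $\mathrm{H}$. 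This delivers the Dirac property of $\tilde\pi_x$ as a byproduct but never needs it as an intermediate claim. If you want to push your disintegration route through, you would need a genuinely new argument for the Dirac step; as things stand, reformulating the selection around generic points, as the paper does, is the natural repair.
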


\begin{proof}
First of all, notice that $\mathrm{H}\equiv\mathrm{H}(\hat\pi)$ is
group since
    $(\Phi_h)^{-1}=\Phi_{h^{-1}}$ and $\Phi_{h\ell}=\Phi_{\ell}\Phi_{h}$.
    Also, $\mathrm{H}$ is closed because the map $h \mapsto
    \Phi_h\hat\pi$ is continuous. Then, to get that $\mathrm{H}$ is compact
    we only need to prove that $\mathrm{H}$ is bounded.

    By contradiction, if $\mathrm{H}$ is not bounded, then
    for any pair of compact sets $K$ and $L$  of $\mathrm{G}$
    there is $h\in \mathrm{H}$ such that $L \cap Kh =\emptyset$.
    Otherwise, $H\subset K^{-1}L$ which is a compact set.
    Since $\hat\pi$ is a
    Borel regular probability measure on $X\times\mathrm{G}$
    we can find a compact set $K$ of $\mathrm{G}$ so that
    $\hat\pi(X\times K)>0$. By induction one gets a sequence $\{h_n\}$ of elements of $\mathrm{H}$ such that
    $Kh_n\cap (Kh_{n-1}\cup\dots\cup Kh_1)=\emptyset$ for all $n\in\mathbb{N}$.
    Thus the sets $\Phi_{h_n}(X\times K)$ are pairwise disjoint for all $n$.
    Since  $\Phi_{h_n}$ preserves  $\hat\pi$ because of $h_n\in \mathrm{H}$,  then
    all these sets have the same positive measure. This implies
    that $\cup_n \Phi_{h_n}(X\times K)$ has infinite
    measure which is a contradiction.

    Now, we will construct the measurable function $P:X\to
    \mathrm{G}$. Consider
    $$
    \mathcal{E}=
    \{(x,g)\in X\times \mathrm{G}: \
    \text{generic point for $\hat\pi$}\} \quad \text{and set} \quad
    \mathcal{E}_x=\{M: (x,g)\in \mathcal{E}\},
    $$
    where a generic point $(x,g)$ in $X\times \mathrm{G}$ of
    the ergodic $(\bar{f},A)$-stationary measure $\hat\pi$
    means that
    $$
    \frac{1}{N}\sum_{n=0}^{N-1} \varphi(F^n_\omega(x,g))
    \to \int \varphi \, d\hat\pi  \quad \text{$\mathbb{P}$-almost surely}
    $$
    for all continuous maps $\varphi:X\times \mathrm{G}\to \mathbb{R}$ with compact support
    where
    $$
    F^n_\omega(x,g)=(f^n_\omega(x),A^n(\bar{x})g), \quad
    \text{$\bar{x}=(\omega,x)\in \Omega\times X$ and $g\in
        \mathrm{G}$, \ \ $n\geq 0$.}
    $$

    \begin{claim}
        If $g,\ell\in \mathcal{E}_x$, then $h=g^{-1}\ell\in \mathrm{H}$.
    \end{claim}
    \begin{proof}
        Since $\Phi_hF_i=F_i\Phi_h$ for all $i\in Y$ it holds that
        $$
        \frac{1}{N}\sum_{n=0}^{N-1} \varphi(F^n_\omega(x,\ell)) =\frac{1}{N}\sum_{n=0}^{N-1} \varphi\circ F^n_\omega(\Phi_h(x,g))
        =\frac{1}{N}\sum_{n=0}^{N-1} \varphi\circ
        \Phi_h(F^n_\omega(x,g)).
        $$
        As both $(x,\ell)$ and $(x,g)$ are generic point for $\hat\pi$ then
        taking limit we get that
        $$
        \int \varphi \, d\pi =  \int \varphi\circ \Phi_h \, d\hat{\pi}
        = \int \varphi \, d\Phi_h\hat{\pi}
        $$
        for all continuous maps $\varphi:X\times \mathrm{G}\to \mathbb{R}$ with compact support.
        This implies that $\Phi_h\hat{\pi}=\hat{\pi}$ and thus $h\in \mathrm{H}$.
     \end{proof}

    For $\mu$-almost every $x\in X$, let $P(x)\in \mathcal{E}_x$
    chosen in a measurable way. This follows
    from~\cite[Corollary~18.7]{kechris2012classical} since
    $d\hat{\pi}=\pi_x \, d\mu(x)$ and $\pi_x(\mathcal{E}_x)=1$ for
    $\mu$-almost every $x\in X$. Now we have that
    $$
    \text{$(x,P(x))\in \mathcal{E}$ \ \ and \ \
        $(f_\omega(x),A(\omega,x)P(x))=F_\omega(x,P(x))\in \mathcal{E}$
        \ \ for $\mathbb{P}$-almost every $\omega\in \Omega$.}
    $$
    By definition $(f_\omega(x),P(f_\omega(x)))\in\mathcal{E}$ and thus
    both
    $A(\omega,x)P(x)$ and $P(f_\omega(x))$ belong to $\mathcal{E}_{f_\omega(x)}$. From the
    above claim $P(f_\omega(x))^{-1}A(\omega,x)P(x)\in \mathrm{H}$.
    This complete the proof.
\end{proof}

\subsubsection{Step 3: Proof of Theorem~\ref{thm:measure-product-group}}
    We start by considering the closed group $\mathrm{H}_0=\mathrm{G}$
    and the random cocycle $(\bar{f},A)$ acting on $X \times
    \mathrm{H}_0$ by means of
    $$
    (\bar{f},A): X \times \mathrm{H}_0 \to X \times \mathrm{H}_0,
    \quad (x,h) \mapsto (f(x),A(x)h).
    $$
    By Proposition~\ref{medida}, we find an ergodic
    $(\bar{f},A)$-stationary probability measure $\hat\pi_0$ on
    $X\times \mathrm{H}_0$.
     If $\mathrm{H}(\hat\pi_0)\not=\mathrm{H}_0$ we can
    cohomologically
    reduce the random $\mathrm{H}_0$-valued cocycle $(\bar{f},A)$ by means
    of Proposition~\ref{lem-reduction} to a random
    $\mathrm{H}_1$-valued cocycle $(\bar{f},A_1)$ where
    $\mathrm{H}_1\eqdef \mathrm{H}(\hat{\pi}_0)$.  Arguing inductively
    we can get more. Pick again an ergodic $(\bar{f},A_1)$-stationary
    probability measure $\hat\pi_1$ on $X\times \mathrm{H}_1$.
    Reduce the random $\mathrm{H}_1$-valued
    cocycle $(\bar{f},A_1)$ to a random $\mathrm{H}_2$-valued cocycle
    $(\bar{f},A_2)$ in the case that $\mathrm{H}_2\eqdef
    \mathrm{H}(\hat\pi_1) \lneq \mathrm{H}_1$. This induction defines
    a partial order and thus by Zorn's lemma we can find a minimal
    random $\mathrm{H}$-valued cocycle $(\bar{f},B)$ which cannot be
    reduced. This implies that $\mathrm{H}(\hat\pi)=\mathrm{H}$ for
    every ergodic $(\bar{f},B)$-stationary probability measure
    $\hat\pi$ on $X\times \mathrm{H}$ . Thus, for
    every $h\in \mathrm{H}$, it holds that $\Phi_h\hat\pi=\hat\pi$. In
    particular, if $d\hat\pi=\pi_x \, d\mu(x)$, then $h\pi_x=\pi_x$ for
    $\mu$-almost every $x\in X$, i.e.,
    $$
    \pi_x(E)=h\pi_x(E)=\pi_x(Eh^{-1}) \quad \text{for all Borel
        subsets $E\subset \mathrm{H}$ and $h\in \mathrm{H}$}.
    $$
    Consequently $\pi_x$ is a right-translation-invariant probability
    measure on $\mathrm{H}$. Since  $\mathrm{H}$ is a compact group
    then $\pi_x$ is the Haar measure of $\mathrm{H}$ for $\mu$-almost
    every $x\in X$. Therefore, $\hat{\pi}=\mu \times m_H$ where $m_H$
    is the Haar measure concluding the proof.

    On the other hand, let $\hat\pi_1$ and $\hat\pi_2$ be two ergodic
    $(\bar{f},B)$-stationary measures on $X\times \mathrm{G}$.
    Since both measure project on the same
    measure $\mu$ we can take two generic points of $\hat\pi_1$ and
    $\hat\pi_2$ of the form $(x,g_1)$ and $(x,g_2)$ respectively.
    Recall that a generic point is understand in the sense that
    $$
    \frac{1}{N}\sum_{n=0}^{N-1} \varphi(F^n_\omega(x,g_j))
    \to \int \varphi \, d\hat\pi_j  \quad \text{$\mathbb{P}$-almost surely, \ \ $j=1,2$}
    $$
    for all continuous maps
    $\varphi:X\times \mathrm{G}\to \mathbb{R}$
    with compact support where
    $$
    F^n_\omega(y,g)= F_{\omega_{n-1}}\circ\dots \circ
    F_{w_0}(y,g)=(f^n_\omega(y),B^n(\bar{y})g) \quad
    \text{for $\bar{y}=(\omega,y)\in \Omega\times X$, $g\in \mathrm{G}$,
       $n\in\N$.}
    $$
    Take $h=g_1^{-1}g_2$. Since $\Phi_h \circ F_i=F_i\circ \Phi_h$ for
    all $i\in Y$ we get that
    $$
    \frac{1}{N}\sum_{n=0}^{N-1} \varphi(F^n_\omega(x,g_2)) =\frac{1}{N}\sum_{n=0}^{N-1}
    \varphi\circ F^n_\omega(\Phi_h(x,g_1))
    =\frac{1}{N}\sum_{n=0}^{N-1} \varphi\circ
    \Phi_h(F^n_\omega(x,g_2)).
    $$
    Taking limit we have that $\Phi_h\hat\pi_1=\hat\pi_2$. Therefore
    we relate any pair of ergodic stationary measure by the map
    $\Phi_h$ for some $h\in\mathrm{G}$. Observe that the measure
    $\hat\pi=\mu\times m_\mathrm{H}$ obtained above it also is a
    $(\bar{f},B)$-stationary ergodic measure on $X\times \mathrm{G}$.
    Since $\hat\pi$ is a product measure we get that also any other
    $(\bar{f},B)$-stationary ergodic measure on $X\times \mathrm{G}$
     must be a product measure. Consequently, we
    get that any $(\bar{f},B)$-stationary probability measure on
    $X\times \mathrm{G}$ is a product measure
    of the form $\mu \times \nu$ with $\nu=m_H \ast \omega$ where
    $\omega$ is a measure on $\mathrm{G}$. This concludes the
    proof of Theorem~\ref{thm:measure-product-group}.

%
%

\subsection{Equivalent definitions of essentially bounded}
%
Now, we will prove Theorem~\ref{thm:equiv}.
    By Proposition~\ref{medida} we have \eqref{item-eb} implies
    \eqref{item-st}. Also \eqref{item-st} implies \eqref{item-cp}
    follows from Proposition~\ref{lem-reduction}. To complete the
    equivalence we will see \eqref{item-cp} implies \eqref{item-eb}.

    Let $(\bar{f},B)$ be a random cocycle cohomologous to
    $(\bar{f},A)$ with values in a compact subgroup $\mathrm{H}$ of
    $\mathrm{G}$. So there is $P:X\to \mathrm{G}$ measurable such that
    $$
    B^n(\bar{x})=P(f^n_\omega(x))^{-1}A^n(\bar{x})P(x) \in \mathrm{H} \quad
    \text{for $\bar{\mu}$-almost every $\bar{x}=(\omega,x)\in
        \bar{X}=\Omega\times X$ and $n>0$.}
    $$
  By regularity of the measure $P{\mu}$ on $\mathrm{G}$,
  there exists a compact subset $K$ of $\mathrm{G}$ such that the set
  $E=\{\bar{x}=(\omega,x)\in\bar{X}:\, P(x)\in K\}$ has
  $\bar{\mu}$-positive measure. Then, for any integer $n>0$,
  if both, $\bar{x}=(\omega,x)$ and $\bar{f}^n(\omega,x)=(\theta^n\omega,f_\omega^n(x))$
  belong to $E$, then
  $ A^n(\bar{x})=P(f^n_\omega(x))B^n(\bar{x})P(x)^{-1}$
  is in compact set
  $L=\{abc^{-1}: \,  a\in K, b\in H, c\in K\}$.
  Setting $\delta=\bar{\mu}(E)>0$, we have by Birkhoff Theorem
  that
  $
  d(\{n\in\N: \, \bar{f}^n(\bar{x})\in E\})=\delta
  $ 
  for $\hat{\mu}$-almost every $\bar{x}$ in $E$
  and hence $d(\{n\in\N: \, A^n(\bar{x})\in L\})\geq\delta$.
Thus, the random cocycle $(\bar{f},A)$ is essentially bounded.

This completes the proof of Theorem~\ref{thm:equiv}.

\begin{rem} \label{rem:equiv}
According to Theorem~\ref{thm:measure-product-group} every
stationary measure of an essentially bounded random cocycle is in
fact invariant. Then, we also have that $(\bar{f},A)$ is
essentially bounded if and only if there is a
$(\bar{f},A)$-invariant probability measure $\eta$ on
$\bar{X}\times \mathrm{G}$ of the form
$\eta=\mathbb{P}\times\hat{\mu}$ where $\hat{\mu}$ is a
probability measure on $X\times\mathrm{G}$.
\end{rem}

Here we relate our results with other literature.

\begin{rem} \label{rem:literatura}
A $\mathrm{G}$-valued cocycle $(f,A)$ is said to be
   \emph{bounded}
    if for every $\varepsilon>0$ there is a compact set $K \subset
    \mathrm{G}$ such that  $ \mu(\{x\in X: A^n(x)\in
    K\})>1-\varepsilon$ for all $n>0$.
    According to~\cite{S81}, a cocycle $(f,A)$ is bounded
    if and only if it is cohomologous to a cocycle with values in a
    compact subgroup of $\mathrm{G}$. Moreover, the equivalence
    between a bounded cocycle $(f,A)$ and the existence
    of a $(f,A)$-invariant probability measure on $X\times \mathrm{G}$
    follows from~\cite{Os96}.
\end{rem}

\begin{rem} Theorem~\ref{thm:equiv} does not follow from
Remark~\ref{rem:literatura}. Indeed, notice that by definition a
$(\bar{f},A)$-invariant probability  $\eta$ on $\bar{X}\times
\mathrm{G}$
     is a measure that $\eta$ projects on $\bar{\mu}=\mathbb{P}\times \mu$
    and has a disintegration $d\eta=\nu_{\bar{x}} \, d\bar{\mu}(\bar{x})$
    such that
    $A(\bar{x})\nu_{\bar{x}}=\nu_{\bar{f}(\bar{x})}$ for
    $\bar{\mu}$-almost every $\bar{x}\in \bar{X}$. However,
    $\eta$ does not necessarily projets on $X\times \mathrm{G}$
    over a $(\bar{f},A)$-stationary measure. Similarly,
    a random cocycle $(\bar{f},A)$ could be cohomologous to a cocycle
    $(\bar{f},B)$  with values in a
    compact subgroup but not necessarily random. Thus, \emph{a priori},
    a bounded random cocycle
    is not necessarily an essentially bounded random cocycle.
\end{rem}

\begin{rem} \label{rem:literatura2}
Theorem~\ref{thm:equiv} includes the results in the literature
given in Remark~\ref{rem:literatura} for deterministic cocycles.
This follows from the fact that in this case (being $\Omega$ a
one-point set) the notion of stationary and invariant measures
coincides (see also Remark~\ref{rem:equiv}). Thus, \emph{a
posteriori}, the notions of essentially bounded and bounded
cocycle are also equivalent.
\end{rem}

\subsection{Invariant measure for essentially bounded linear cocycles}

In this subsection we are going to classify the invariant measures
of essentially bounded random linear cocycles. The following
result proves Theorem~\ref{main-cocycle} for essentially bounded
cocycles. We will prove it as a consequence of
Theorem~\ref{thm:measure-product-group}.

\begin{prop}\label{SLd-reduc}
Let $(\bar{f},A)$ be an essentially bounded random
$\mathrm{GL}(d)$-valued cocycle. Then, there is a  random linear
cocycle $(\bar{f},B)$ with values on a compact subgroup of
$\mathrm{GL}(d)$ and cohomologous to $(\bar{f},A)$ such that every
$ (\bar{f},B)$-stationary probability measure $\hat{\mu}$ on
$X\times \mathbb{P}(\mathbb{R}^d)$  is of the form $\hat{\mu}=
\mu\times \nu$, where $\nu$ is a probability  measure on
$\mathbb{P}(\mathbb{R}^d)$. In particular, every stationary
measure is, in fact, invariant.
\end{prop}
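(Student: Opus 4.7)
The plan is to split the proof into two independent pieces matching the two assertions of the proposition: (a) a cohomological reduction of an essentially bounded random $\mathrm{GL}(d)$-valued cocycle to a compact-group-valued one, and (b) a structural classification of stationary measures for compact-group-valued random cocycles as product measures (from which invariance also follows). As a preliminary step I would normalize to a random $\mathrm{SL}^\pm(d)$-valued cocycle by replacing $A$ by $A/|\det A|^{1/d}$, which does not alter the projective action.

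For part (a), the idea is to construct an $A$-invariant measurable family of unit-volume inner products on $\R^d$. Consider the symmetric space $\mathcal{H}=\mathrm{SL}^\pm(d)/\mathrm{O}(d)$ with its natural $\mathrm{SL}^\pm(d)$-action and the skew-product $F(\bar{x},\xi)=(\bar{f}(\bar{x}),A(\bar{x})\xi)$ on $\bar{X}\times\mathcal{H}$. Essential boundedness ensures that both $\|A^n(\bar{x})\|$ and $\|A^n(\bar{x})^{-1}\|$ (the latter controlled by the former in $\mathrm{SL}^\pm(d)$) stay bounded along a set of times of positive lower density for $\bar{\mu}$-a.e.\ $\bar{x}$; equivalently, the orbit $\{A^n(\bar{x})\xi_0\}$ returns to a fixed compact subset of $\mathcal{H}$ with positive lower density. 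A Krylov--Bogolyubov averaging argument along these return times then produces an $F$-invariant probability measure $\hat{\eta}$ on $\bar{X}\times\mathcal{H}$ projecting onto $\bar{\mu}$. Disintegrating $\hat{\eta}$ along fibers gives a measurable section $Q:\bar{X}\to\mathcal{H}$ with $A(\bar{x})\cdot Q(\bar{x})=Q(\bar{f}(\bar{x}))$; since $A$ depends only on $\omega_0$, a conditional-expectation argument with respect to the sub-$\sigma$-algebra of $X$ upgrades $Q$ to depend only on $x$. A measurable symmetric positive-definite square root $P(x)$ representing $Q(x)$ then conjugates the cocycle: $B_i(x)=P(f_i(x))^{-1}A_i(x)P(x)$ preserves the Euclidean inner product and hence takes values in the compact group $\mathrm{O}(d)\subset\mathrm{GL}(d)$.

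For part (b), suppose $B_i(x)\in K$ for a compact subgroup $K\subset\mathrm{GL}(d)$, and fix a $K$-invariant metric $\rho$ on $\mathbb{P}(\R^d)$. Let $\hat{\mu}$ be a stationary measure with disintegration $d\hat{\mu}=\nu_x\,d\mu(x)$. The first step is to promote stationarity to invariance: forming on $\bar{X}_+\times\mathbb{P}(\R^d)$ the forward process $\nu_n(\omega,x)=(B_\omega^n(x)^{-1})_*\nu_{f_\omega^n(x)}$ yields a bounded martingale in the compact space of probability measures; since the $K$-action is by $\rho$-isometries, martingale convergence combined with isometric rigidity forces $\nu_0=\nu_n$ almost surely, i.e., $(B_i(x))_*\nu_x=\nu_{f_i(x)}$. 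The second step uses the hypothesis that $\mu$ is ergodic for the full group generated by the $f_i$'s: the map $x\mapsto\nu_x$ is then a $B$-equivariant section whose Wasserstein distance $W_\rho(\nu_x,\nu_{x'})$ is invariant under all compositions of the $f_i$ and their inverses, hence $\mu\otimes\mu$-essentially constant by ergodicity, and then identically zero by a Hopf-type chaining argument along words of the form $x\to f_{i_1}(x)\to f_{i_2}(f_{i_1}(x))\to\cdots$. This yields $\hat{\mu}=\mu\times\nu$, and the ``in particular'' clause is contained in the first step of this analysis.

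The main technical obstacle lies in the first step of part (b), namely promoting stationarity to invariance for compact-group-valued cocycles: making the martingale/isometry argument rigorous requires careful handling of the interaction between the base dynamics $\bar{f}_+$ and the random walk on $K$, and of the metric structure on the space of probabilities on $\mathbb{P}(\R^d)$. In part (a), a secondary delicacy is ensuring that the Krylov--Bogolyubov averages along the return-time subsequence are tight on the non-compact space $\mathcal{H}$; this is precisely where the \emph{lower}-density (rather than merely nonzero-density) feature in the definition of essentially bounded enters, through a uniform tightness estimate on the returning subsequence.
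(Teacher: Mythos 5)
Your proposal takes a genuinely different route from the paper, but both halves have gaps that would need substantial repair before they could replace the paper's argument.

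The paper does not attempt a direct classification of stationary measures on $X\times\mathbb{P}(\R^d)$. Instead it proves a general classification theorem for stationary measures on $X\times \mathrm{G}$ for arbitrary locally compact $\mathrm{G}$ (Theorem~\ref{thm:measure-product-group}), whose key ingredient is a Zorn's-lemma descent to a \emph{minimal} compact subgroup $\mathrm{H}$ for which the cocycle cannot be further cohomologically reduced; at that minimal level, the fiber measures on $\mathrm{H}$ are right-translation invariant, hence Haar, which forces the product structure. Proposition~\ref{SLd-reduc} is then deduced by a tensoring trick: given $\hat\mu$ on $X\times\mathbb{P}(\R^d)$ with fibers $\nu_x$, one forms $\pi_x=\nu_x\times m^{d-1}$, checks it is supported on $\mathrm{GL}(d)$ and is stationary there (using $B_i(x)\in\mathrm{O}(d)$ to preserve $m$), applies the group-level classification, and reads off that $\nu_x$ is constant.

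The most serious gap is in your part (b). First, your Wasserstein chaining step asserts that $W_\rho(\nu_x,\nu_{x'})$ is invariant under the dynamics. But
$$
W_\rho(\nu_{f_i(x)},\nu_{f_i(x')})
= W_\rho\bigl((B_i(x))_*\nu_x,(B_i(x'))_*\nu_{x'}\bigr),
$$
and since typically $B_i(x)\neq B_i(x')$, this is \emph{not} equal to $W_\rho(\nu_x,\nu_{x'})$ even though each $B_i(\cdot)$ acts by $\rho$-isometries: the isometry applied to the two fibers is not the same. Second, even granting invariance, the conclusion you want is simply false for an arbitrary compact-group-valued cocycle: if $(\bar f,B)$ is a nontrivial $\mathrm{O}(d)$-valued coboundary, $B_i(x)=P(f_i(x))^{-1}P(x)$, then $\nu_x=P(x)^{-1}\nu$ defines an invariant (hence stationary) measure on $X\times\mathbb{P}(\R^d)$ that is not a product unless $P$ is essentially constant. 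This shows that reducing to $\mathrm{O}(d)$ alone is not enough; the further descent to a minimal $\mathrm{H}$ is an essential ingredient, and it is entirely absent from your outline. The ``isometric rigidity'' martingale step for stationarity$\Rightarrow$invariance is plausible and similar in spirit to standard arguments, but it does not rescue the product conclusion.

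In part (a), two points need care. Averaging the Krylov--Bogolyubov sums only along the return-time subsequence does not produce an $F$-invariant measure; one must average over all times $0,\dots,n-1$ and use the positive-lower-density returns merely to show that the mass does not escape to infinity, exactly as in the paper's Proposition~\ref{medida}. And disintegrating an invariant measure on $\bar X\times\mathcal H$ does \emph{not} give a section $Q:\bar X\to\mathcal H$; it gives conditional measures, and to extract an equivariant section one needs the barycenter map in the CAT(0) space $\mathcal H$. Moreover, making $Q$ depend only on $x$ (not on $\omega$) is precisely the subtle ``randomness of the conjugacy'' issue that the paper resolves by working with stationary measures on $X\times\mathrm{G}$ rather than invariant measures on $\bar X\times\mathcal H$; the conditional-expectation remark in your sketch does not obviously preserve the equivariance $A(\bar x)\cdot Q(\bar x)=Q(\bar f(\bar x))$.
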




\begin{proof}
    By Theorem~\ref{thm:measure-product-group} and
    Remark~\ref{rem:O(2)} we have a random $\mathrm{O}(d)$-valued
    cocycle $(\bar{f},B)$ cohomologous to $(\bar{f},A)$ such that any
    $(\bar{f},B)$-stationary probability on $X\times \mathrm{GL}(d)$
    is a product measure. Let $\hat{\mu}$ be a
    $(\bar{f},B)$-stationary measure on $X\times
    \mathbb{P}(\mathbb{R}^d)$. We write
    $$
    d\hat{\mu}=\nu_x\,d\mu(x) \ \ \text{where
    $\nu_x$ is a measure on $\mathbb{P}(\mathbb{R}^d)$.}
    $$
    Seeing
    $\mathbb{P}(\mathbb{R}^d)$ as the unit $(d-1)$-sphere in
    $\mathbb{R}^d$ with antipodal points identified, we can pull back $\nu_x$ on $\mathbb{S}^d$ to obtain a measure $\nu_x'$ on $\mathbb{S}^{d-1}$ defined as the unique measure on $\mathbb{S}^{d-1}$ invariant by $h\to -h$ and projecting on $\nu_x$ by the canonical projection $\mathbb{S}^{d-1}\to \mathbb{P}(\mathbb{R}^d)$ (more precisely, for any measurable bounded map $\phi:\mathbb{S}^{d-1}\rightarrow \R$ , the even function $h\mapsto \frac{1}{2}(\phi(h)+\phi(-h))$ naturally defines a map $\phi': \mathbb{P}(\mathbb{R}^d)\rightarrow \R$ and then  $\int_{\mathbb{S}^{d-1}} \phi d\nu_x'=\int_{\mathbb{P}(\mathbb{R}^d)}\phi'd\nu_x$). The measure $\hat{\mu}'=\nu_x' d\mu(x)$ is also stationary for the action of $(\overline{f},B)$ on $X\times \mathbb{S}^{d-1}$.\\

    We also denote by $m$ the
    normalized Lebesgue measure on $\mathbb{S}^{d-1}$. We will
    see $\nu_x'$ and $m$ as a probability measures on $\mathbb{R}^d$
    supported on $\mathbb{S}^{d-1}$. For each $x\in X$, we define the
    probability measure
    $$
    \pi_x=\nu_x'\times m \,\times
    \stackrel{d-1}{\dots\dots} \times \, m  \quad \text{on  \ \ $\mathbb{R}^d\times\,
        \stackrel{d}{\dots\dots}\,\times\,\mathbb{R}^d$}.
    $$
    Now, we identify $\mathbb{R}^d\times\,
    \stackrel{d}{\dots\dots}\,\times\,\mathbb{R}^d$ with the set $\mathrm{M}(d)$ of
    $d\times d$ matrices with real coefficients. Again, we can see
    $\pi_x$ as a measure on $\mathrm{M}(d)$.
    \begin{claim}
        The measure $\pi_x$ is supported on $\mathrm{GL}(d)$.
    \end{claim}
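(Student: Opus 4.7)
The plan is to show that $\pi_x$ gives zero mass to the complement $\mathrm{M}(d)\setminus\mathrm{GL}(d)$, which is the set of singular matrices, i.e., tuples $(v_1,\dots,v_d)\in(\mathbb{R}^d)^d$ whose columns are linearly dependent. Viewing a dependent tuple via the smallest index $k$ at which $v_k$ lies in the span of the preceding columns, I would write
\[
\mathrm{M}(d)\setminus\mathrm{GL}(d) \ \subset\ \bigcup_{k=2}^{d} S_k, \qquad S_k=\{(v_1,\dots,v_d):\ v_k\in\mathrm{span}(v_1,\dots,v_{k-1})\}.
\]
The case $k=1$ is excluded because $\nu_x$ is supported in $\mathbb{S}^{d-1}$, so $v_1\neq 0$ almost surely.

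The key elementary fact I would invoke is that the normalized Lebesgue measure $m$ on $\mathbb{S}^{d-1}$ assigns zero mass to any proper linear subspace of $\mathbb{R}^d$, since its intersection with the sphere is a lower-dimensional great subsphere. Applying Fubini to $\pi_x(S_k)=\int \mathbf{1}_{S_k}\,d\nu_x(v_1)\,dm(v_2)\cdots dm(v_d)$ and integrating with respect to $v_k$ first, the coordinates $v_{k+1},\dots,v_d$ contribute factors of mass one and the integrand reduces to
\[
m\bigl(\mathrm{span}(v_1,\dots,v_{k-1})\cap \mathbb{S}^{d-1}\bigr).
\]
Since $k-1\leq d-1$, this span is always a proper subspace of $\mathbb{R}^d$, regardless of the values of $v_1,\dots,v_{k-1}$, so the integrand vanishes identically and $\pi_x(S_k)=0$.

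Summing over $k=2,\dots,d$ yields $\pi_x(\mathrm{M}(d)\setminus\mathrm{GL}(d))=0$, which is exactly the claim. I do not anticipate any real obstacle: only the measure-theoretic fact about proper subspaces in the sphere is needed, and the argument does not use anything about $\nu_x$ beyond its support being contained in $\mathbb{S}^{d-1}$, nor any special structure of the Bernoulli factor.
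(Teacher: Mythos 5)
Your proof is correct and follows essentially the same route as the paper's: decompose the singular matrices according to the first column that lies in the span of its predecessors, then apply Fubini and the fact that Lebesgue measure on $\mathbb{S}^{d-1}$ gives zero mass to proper linear subspaces. The only differences are cosmetic (the paper first restricts to columns on the sphere via a set $\mathcal{E}_0$ and uses a shifted index, whereas you handle the $v_1=0$ case separately), so there is nothing substantive to add.
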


    \begin{proof}
   We will prove that $\pi_x$-almost every $M$ in $\mathrm{M}(d)$ is
   invertible. Writing $M$  as
        $M=[h_1,\dots,h_d]$, it is equivalent to prove that $\nu_x'\times m \,\times
    \stackrel{d-1}{\dots\dots} \times \, m$ almost every $h_1,...,h_{d}$ are linearly independant. This can be done by induction on $d$: assuming that $\nu_x'\times m \,\times
    \stackrel{d-2}{\dots\dots} \times \, m$ almost every $h_1,...,h_{d-1}$ are linearly independant, thanks to Fubini theorem it is then enough to prove that for any fixed vectors $h_1,...,h_{d-1}$ linearly independant, for $ m$-almost every $h_d$, the family $h_1,...,h_d$ is linearly independant, or equivalently that $h_d\notin \mathrm{span}(h_1,\dots,h_{d-1})$. But this is obvious since the set $\mathrm{span}(h_1,\dots,h_{d-1})\cap \mathbb{S}^{d-1}$ is a strict submanifold of $\mathbb{S}^{d-1}$, hence has $m$-measure null.
    \end{proof}

    Now, we consider the probability measure $\hat{\pi}$ defined as $
    d\hat{\pi}= \pi_x \, d\mu(x)$. We can write
    $$
    \hat\pi=\hat\mu'\times m^{d-1} \quad \text{on \ \ $(X\times
        \mathbb{R}^d)\times (\mathbb{R}^d)^{d-1}$ \  where  \ $m^{d-1}=m\times\,
        \stackrel{d-1}{\dots\dots}\,\times\,m$.}
    $$
    Moreover, according to the above claim $\hat\pi$ can be seen as a
    measure on $X\times \mathrm{GL}(d)$.

    \begin{claim}
        The probability $\hat\pi$ is a $(\bar{f},B)$-stationary measure on
        $X\times \mathrm{GL}(d)$.
    \end{claim}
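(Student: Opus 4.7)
The plan is to verify the stationarity equation $\hat\pi \ast p = \hat\pi$ by direct computation, using the disintegration $\hat\pi = \int_X \delta_x \times \pi_x \, d\mu(x)$ with $\pi_x = \nu_x \times m \times \cdots \times m$ ($d-1$ copies of $m$). Two ingredients drive the argument. First, by Remark~\ref{rem:O(2)}, $B$ takes values in the orthogonal group $\mathrm{O}(d)$, so $B_i(x)$ preserves the normalized Lebesgue measure $m$ on $\mathbb{S}^{d-1} \equiv \mathbb{P}(\mathbb{R}^d)$ for $p$-almost every $i$ and $\mu$-almost every $x$. Second, the stationarity of $\hat\mu$ on $X \times \mathbb{P}(\mathbb{R}^d)$ translates, via the disintegration $d\hat\mu = \nu_x \, d\mu(x)$ and the $f_i$-invariance of $\mu$ (which holds for $p$-a.e.~$i$), into the identity
\[
\int_Y (B_i(f_i^{-1}y))_* \nu_{f_i^{-1}y} \, dp(i) = \nu_y \quad \text{for $\mu$-a.e.~$y \in X$.}
\]

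The fiber action of $(\bar{f},B)$ on $X \times \mathrm{GL}(d)$ is $(x, M) \mapsto (f_i(x), B_i(x) M)$, which on $M = [h_1, \dots, h_d]$ acts column-wise, namely $B_i(x)M = [B_i(x)h_1, \dots, B_i(x)h_d]$. Hence the pushforward of $\pi_x$ under $M \mapsto B_i(x) M$ splits as the product
\[
(B_i(x))_* \nu_x \times (B_i(x))_* m \times \cdots \times (B_i(x))_* m,
\]
and by orthogonality of $B_i(x)$ the $d-1$ copies of $m$ are preserved; only the first marginal changes nontrivially.

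Integrating over $X$ with respect to $\mu$, using its $f_i$-invariance, and then averaging over $Y$ with respect to $p$, Fubini together with the stationarity identity for $\nu$ collapses the first factor back to $\nu_y$, yielding
\[
\hat\pi \ast p = \int_X \delta_y \times \bigl(\nu_y \times m \times \cdots \times m\bigr) \, d\mu(y) = \hat\pi.
\]
The only substantive observation is the commutation of the fiber action with the product structure of $\pi_x$, which is precisely the invariance of the spherical Lebesgue measure under orthogonal transformations; no real obstacle is expected, since the construction of $\pi_x$ was tailored to this property.
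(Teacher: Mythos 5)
Your argument is correct and follows essentially the same route as the paper's: decompose $\pi_x = \nu_x \times m^{d-1}$, observe that the column-wise action of $B_i(x) \in \mathrm{O}(d)$ preserves the $m$-factors, and then let the stationarity of $\hat\mu$ (together with $f_i$-invariance of $\mu$) collapse the first factor after averaging over $Y$. The only cosmetic difference is that you state the stationarity identity for the disintegration $\nu_x$ explicitly before integrating, whereas the paper carries out the corresponding computation directly at the level of the measures $\hat\pi$ and $\hat\mu$.
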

    \begin{proof}
        For each $i\in Y$ we have that $F_i\equiv(f_i,B_i)$ acts on
        $X\times \mathrm{GL}(d)$ as $F_i(x,M)=(f_i(x),B_i(x)M)$. We write
        $M=[h_1,\dots,h_d]$ with $h_i\in \mathbb{R}^d$. Then
        $B_i(x)M=[B_i(x)h_1,\dots,B_i(x)h_d]$ and
        $$
        B_i(x)\pi_x =
        B_i(x)\nu_x' \times (B_i(x)m)^{d-1} \quad \text{for all $i\in Y$}.
        $$
         Observe that
        since  $B_i(x)\in
        \mathrm{O}(d)$ for $\mu$-almost $x\in X$ then $B_i(x)m=m$. Hence,
        using that $\mu$ is $f_i$-invariant for $p$-almost every $i\in Y$
        we have that
        \begin{align*}
        \int_{Y} F_i\hat\pi \, dp(i)&=
        \int_Y \int_X B_i(x)\nu_x' \times (B_i(x)m)^{d-1} \, df_i^{-1}\mu(x)
        dp(i)  \\
        &= \int_Y \int_X B_i(x)\nu_x' \times m^{d-1} \, d\mu(x) dp(i) =
        \int_{Y} F_i\hat\mu' \, dp(i) \times m^{d-1} \\
        &=  \hat\mu' \times m^{d-1} = \hat\pi.
        \end{align*}
       We have used that $\int_{Y} F_i\hat\mu' \, dp(i)=\hat\mu'$ since $\hat\mu'$ is a
        $(\bar{f},B)$-stationary measure.
    \end{proof}

    From this claim and according to Theorem~\ref{thm:measure-product-group}, $\hat\pi$ is
    a product measure of the form $\hat{\pi}=\mu\times\nu$ with $\nu$
    a measure on $\mathrm{GL}(d)$. This implies that $\pi_x=\nu$ for
    $\mu$-almost every $x\in X$. Hence, since $\pi_x=\nu_x' \times m
    \times\,
    \stackrel{d-1}{\dots\dots}\,\times\,m$, then $\nu_x'$ does not
    depend on $x$ and so does $\nu_x$, thus $\hat\mu$ is also a product measure.
    This completes the proof of Proposition \ref{SLd-reduc}.
\end{proof}

\section{Essentially unbounded cocycles}  \label{sec:unbounded} 
In this section we are going to provide an alternative proof of
Theorem~\ref{main-cocycle} for essentially unbounded random
$\mathrm{GL}(2)$-valued cocycle. To do this, first we explain the
notion on essentially unbounded cocycle in the particular case of
linear cocycles.

We say that a sequence $(u_n)_n$ of real numbers  \emph{converges
essentially to infinity} if for every $K>0$ the lower asymptotic
density of $D_K=\{n\geq 0: u_n \leq K\}$ is zero. That is, if
$$
 d(D_K)\eqdef \liminf_{n\to \infty} \frac{\#\big([0,n]\cap D_K\big)}{n} =0.
$$
Let $(f,A)$ be a linear cocycle over an ergodic preserving
invertible transformation $f$ of a standard Borel probability
space $(X,\mu)$. Notice that the set of points $x\in X$ such that
the sequence $(u_n)_n$ with $u_n=\|A^n(x)\|$ converges essentially
to infinity is $f$-invariant. Thus, by the ergodicity of the
measure $\mu$, this set is either $\mu$-null or $\mu$-conull. This
implies the following dichotomy:
\begin{enumerate}
\item \label{item1} $\|A^n(x)\|$
converges essentially to infinity for $\mu$-almost every $x\in X$;
\item \label{item2} $\|A^n(x)\|$ does not converge essentially to infinity
for $\mu$-almost every $x\in X$.
\end{enumerate}
This dichotomy allows us to classify the linear cocycles as
follows:
\begin{defi}
A linear cocycle $(f,A)$ is said to be \emph{essentially
unbounded} if~\eqref{item1} holds. Otherwise, i.e., in the
case~\eqref{item2}, or equivalently, if there are $K>0$,
$\delta>0$ and a set $E\subset X$ of positive $\mu$-measure such
that $d(\{n\geq 0: \|A^n(x)\| \leq K\})\geq \delta$ for all $x\in
E$, the cocycle $(f,A)$ is called \emph{essentially bounded}.
\end{defi}

It is easy to check that the new definition coincides with
the older when $\mathrm{G}=\mathrm{GL}(d)$. Notice that a random
linear cocycle $(\bar{f},A)$ is a particular case of a linear
cocycle  over  an ergodic $\bar{\mu}$-preserving invertible
transformation $\bar{f}$. In particular, the notions of
essentially bounded and essentially unbounded linear cocycle
applies for random linear cocycles.

We are going to caracterize first the invariant measures of
essentially unbounded deterministic $\mathrm{GL}(2)$-valued linear
cocycles $(f,A)$. To study these measures, we can normalize the
cocycles by dividing $A$ by $|\det A|^{1/2}$, and assume that it
is a random $\mathrm{SL}^\pm(2)$-valued cocycle.

\begin{thm} \label{thm11}
    Let $(f,A)$ be a essentially unbounded
    $\mathrm{SL}^{\pm}(2)$-valued cocycle.
    Then, there are measurable families of one-dimensional linear
    subspaces $E_1(x)$ and $E_2(x)$ such that any $(f,A)$-invariant
    measure $\hat{\mu}$ on $X\times
    \mathbb{P}(\mathbb{R}^2)$ 
    is of the form
    $$
    d\hat{\mu}=\big(\lambda \delta_{E_1(x)} + (1-\lambda) \delta_{E_2(x)} \big)
    \,   d\mu(x) \quad \text{for some $0\leq \lambda \leq 1$.}
    $$
\end{thm}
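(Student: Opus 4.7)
My plan rests on the following geometric observation about $\mathrm{SL}^{\pm}(2)$. If $M\in \mathrm{SL}^{\pm}(2)$ has singular values $\sigma\geq 1\geq \sigma^{-1}$ with SVD orthonormal frame $(v^+_M,v^-_M)$ satisfying $\|Mv^\pm_M\|=\sigma^{\pm 1}$ and image directions $[u^\pm_M]=[Mv^\pm_M]$, a direct computation shows that for every $\varepsilon>0$ any direction in $\mathbb{P}(\R^2)$ at projective distance at least $\varepsilon$ from $[v^-_M]$ is sent by $M$ to within distance $O(1/(\varepsilon\sigma^2))$ of $[u^+_M]$, while $[v^-_M]$ is mapped to $[u^-_M]$. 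Consequently, for every probability measure $\nu$ on $\mathbb{P}(\R^2)$,
$$
M_*\nu \;=\; \nu\big(\{[v^-_M]\}\big)\,\delta_{[u^-_M]} \;+\; \big(1-\nu(\{[v^-_M]\})\big)\,\delta_{[u^+_M]} \;+\; o(1) \quad \text{as } \sigma\to\infty,
$$
where the error is in the weak-$*$ topology. In particular, every weak-$*$ limit of push-forwards by a sequence $(M_n)$ in $\mathrm{SL}^\pm(2)$ with $\|M_n\|\to\infty$ is atomic and supported on at most two points.

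Starting from an $(f,A)$-invariant probability $\hat\mu$ on $X\times \mathbb{P}(\R^2)$, I would disintegrate $d\hat\mu=\nu_x\,d\mu(x)$, giving the invariance relation $A^n(x)_*\nu_x=\nu_{f^n(x)}$ for $\mu$-a.e.\ $x$. By Lusin's theorem, for every $j\in \N$ there is a compact set $K_j\subset X$ with $\mu(K_j)\to 1$ on which $y\mapsto\nu_y$ is weakly continuous. The essentially unbounded hypothesis provides a density-one set of integers $n$ with $\|A^n(x)\|\to\infty$ for $\mu$-a.e.\ $x$, while Poincar\'e recurrence on $K_j$ yields positive-density returns, so their intersection is infinite; from it I extract a subsequence $n_k\to\infty$ with $f^{n_k}(x)\in K_j$, $f^{n_k}(x)\to x$, and $\|A^{n_k}(x)\|\to\infty$. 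Continuity on $K_j$ gives $\nu_{f^{n_k}(x)}\to\nu_x$, while the geometric lemma forces every weak-$*$ limit of $A^{n_k}(x)_*\nu_x$ to be supported on at most two atoms. Hence $\nu_x$ itself is supported on at most two points for $\mu$-a.e.\ $x$.

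Next I would construct the canonical measurable fields $E_1(x)$ and $E_2(x)$. Since $\|M^{-1}\|=\|M\|$ in $\mathrm{SL}^\pm(2)$, the hypothesis of essential unboundedness is symmetric under cocycle inversion. Applying the previous analysis to both the forward and backward cocycle, I expect to identify two well-defined limit directions at $\mu$-a.e.\ $x$: a stable direction $E_1(x)=\lim_{n\to\infty}[v^-_{A^n(x)}]$ and its analogue $E_2(x)$ coming from the backward cocycle; both depend only on $(f,A)$ and are measurable in $x$. By the previous step the atoms of any invariant $\nu_x$ must lie in $\{E_1(x),E_2(x)\}$. Setting $\lambda(x):=\nu_x(\{E_1(x)\})$, the invariance $A(x)_*\nu_x=\nu_{f(x)}$ together with the equivariance of the pair $\{E_1,E_2\}$ under the cocycle yields either $\lambda(x)=\lambda(f(x))$ or $\lambda(x)=1-\lambda(f(x))$; consequently $\min(\lambda(x),1-\lambda(x))$ is $f$-invariant, hence constant by ergodicity, and constancy of $\lambda$ itself follows either directly or after resolving a uniform atom-swap along orbits via a measurable two-fold covering.

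The main obstacle I anticipate is the intrinsic, measurable construction of $E_1(x)$ and $E_2(x)$ without invoking integrability or the Oseledets theorem. One must prove convergence -- and not only subsequential existence -- of the singular directions $[v^\pm_{A^n(x)}]$ in the essentially unbounded regime, or alternatively select a canonical sequence of times such as successive record maxima of $\|A^n(x)\|$ along which the construction is manifestly measurable, and then verify that the resulting subspaces account for the supports of every invariant measure simultaneously. Handling the two-point bundle $x\mapsto\{E_1(x),E_2(x)\}$ and the possible swap of its atoms under the cocycle is a further but more routine technical point.
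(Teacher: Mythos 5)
Your geometric starting point---that $M\in\mathrm{SL}^\pm(2)$ with large norm pushes all directions at projective distance $\geq\varepsilon$ from the most-contracted axis into a neighbourhood of size $O(1/(\varepsilon^2\|M\|^2))$ of the most-expanded image axis (you wrote $O(1/(\varepsilon\sigma^2))$, but the correct exponent is $\varepsilon^2$)---is exactly the paper's Lemma~\ref{lemma1}. Your first step, using Lusin continuity of $x\mapsto\nu_x$ together with measure-theoretic recurrence to conclude that each conditional $\nu_x$ is supported on at most two points, is a valid alternative to the paper's argument, which instead shows the quantitative lower bound $\|\nu_x\|\geq 1/2$ on the largest atom by applying Birkhoff to the bad set $B_\varepsilon$ and contradicting essential unboundedness. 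One wording issue: ``the essentially unbounded hypothesis provides a density-one set of integers $n$ with $\|A^n(x)\|\to\infty$'' overstates the hypothesis, which only gives that $\{n:\|A^n(x)\|>K\}$ has \emph{upper} density one. Your extraction still works, because a set of positive lower density (the visit times to a ball intersected with a Lusin set) must meet any set of upper density one in a set of positive upper density; but this needs to be said.

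The genuine gap is your second step: the construction of canonical measurable fields $E_1(x),E_2(x)$ from the singular directions $[v^\pm_{A^n(x)}]$. You recognize this yourself---without integrability (hence without Oseledets) there is no reason for those directions to converge, and even if they did it would not be automatic that the atoms of \emph{every} invariant measure fall on them. The paper sidesteps this completely: rather than building $E_1,E_2$ a priori from the cocycle, it derives them a posteriori from the ergodic measures. Concretely, the atom-mass bound of Claim~\ref{claim1} together with a convexity argument shows there are at most two ergodic $(f,A)$-invariant measures projecting on $\mu$ (if there were three, a suitable convex combination would have maximal atom mass $<1/2$), and each ergodic one is forced to be either of the single-atom form $\delta_{E_x}\,d\mu(x)$ or the symmetric two-atom form $\tfrac12(\delta_{E_x}+\delta_{F_x})\,d\mu(x)$; the fields $E_1,E_2$ are then read off from these (at most two) measures, and constancy of $\lambda$ is automatic from the ergodic decomposition. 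The observation you are missing is that your own step~1, applied to convex combinations of invariant measures, already implies that the atom fields of any two ergodic measures must agree as a two-point set $\{E_1(x),E_2(x)\}$; that averaging argument would let you bypass the singular-direction construction entirely and close the proof along the paper's lines.
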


\begin{rem} If the linear cocycle satisfies the integrability conditions
    $\log^+ \|A^{\pm 1}\| \in L^1(\mu)$ and has different extremal
    Lyapunov exponents one gets that $E_1=E^+$ and $E_2=E^-$ where
    $\mathbb{R}^2=E^+(x)\oplus E^-(x)$ is the Oseledets
    decomposition. See~\cite[Lemma~5.25]{Via14}.
\end{rem}

Before proving the above theorem, we will need some
estimates. 
First, we identify the projective space $\mathbb{P}(\mathbb{R}^2)$
with $\mathbb{S}^1$. Namely we identify the one-dimensional vector
space $E$ with a unitary vector $h\in \mathbb{R}^2$ so that in
polar form $h=e^{i\theta}$ with $\theta\in \mathbb{S}^1\equiv
\mathbb{R}\,\mathrm{mod} \pi$. Then we can see the projective
action of a $\mathrm{SL}^\pm(2)$-matrix $A$ as a map
$$
\text{$A : \mathbb{S}^1\to \mathbb{S}^1$ given by} \
AE=\spn(e^{iA(\theta)}) \ \ \text{where \
    $E=\spn(e^{i\theta})$.}
$$
We endow $\mathbb{S}^1=
\mathbb{R}/\pi \mathbb{Z}$ with its metric $d(\theta_1,\theta_2)=\inf_{k\in\Z} |\theta_1-\theta_2-k\pi|$. Notice that for $\theta_1,\theta_2$ in $\mathbb{S}^1$, the quantity $\sin|\theta_1-\theta_2|$ is well defined (it does not depend of the representatives of $\theta_1$ and $\theta_2$) and satisfies $\sin|\theta_1-\theta_2|\geq \frac{2}{\pi} d(\theta_1,\theta_2)$.
\begin{lem} \label{lemma1} There is $\theta_0 \in \mathbb{S}^1$ such that
    for every $\varepsilon>0$
    $$
    d(A(\theta_1),A(\theta_2)) \leq
    \frac{2\pi^3}{\|A\|^2\varepsilon^2} \quad \text{for all
        $\theta_1,\theta_2\in \mathbb{S}^1$ with  $d(\theta_i,\theta_0) \geq
        \varepsilon$ for $i=1,2$.}
    $$
\end{lem}
\begin{proof}
    Let $h_1$ and $h_2$ be two linearly independent unitary vectors in
    $\mathbb{R}^2$. We denote by $m$ the maximum between $\|Ah_1\|$
    and $\|Ah_2\|$. Set $h=ah_1 + b h_2$ with $a,b\in \mathbb{R}$ and
    $\|h\|=1$. By  Cramer formula we get that $
    |a|, |b| \leq 2/|\det(h_1,h_2)|. 
    $ Hence, $ \|Ah\|\leq (|a|+|b|)m \leq 4m / |\det(h_1,h_2)|$ and
    thus
    $$
    \|A\|=\sup \{ \|Ah\|: \|h\|=1\}\leq \frac{4m}{|\det(h_1,h_2)|}.
    $$
    This implies that
    $$
    \|A\|\,|\det(h_1,h_2)| \leq 4 \max\{\|Ah_1\|,\|Ah_2\|\} \quad \text{for
        all $h_1,h_2$ in $\mathbb{R}^2$ with $\|h_1\|=\|h_2\|=1$}.
    $$
    Let $h_0=e^{i\theta_0}$ be such that
    $\|Ah_0\|=\min\{\|Ah\|: \|h\|=1\}$  with $\theta_0\in
    \mathbb{S}^1$.
    Notice that for every
    $h=e^{i\theta}$ with $\theta\in \mathbb{S}^1$
    it holds that
    $|\det(h,h_0)|=\sin(|\theta-\theta_0|)\geq 2
    d(\theta,\theta_0)/\pi$. Hence if $d(\theta,\theta_0)\geq
    \varepsilon$,

    $$
    \frac{2}{\pi} \|A\|\varepsilon \leq 4
    \|Ah\| \quad \text{for every
        $h=e^{i\theta}$ with $\theta\in \mathbb{S}^1$.} $$

    Finally, for any $h_1=e^{i\theta_1}$ and
    $h_2=e^{i\theta_2}$ so that
    $d(\theta_j,\theta_0)\geq \varepsilon$ for $j=1,2$ it
    holds that $\|Ah_1\|\geq \frac{\|A\|\varepsilon}{2\pi}$, $\|Ah_2\|\geq \frac{\|A\|\varepsilon}{2\pi}$ and $|\det(Ah_1,Ah_2)=|\det(h_1,h_2)|\leq 1$, hence
    $$
    \frac{2}{\pi} d(A(\theta_1),A(\theta_2)) \leq \sin(|A(\theta_1)-A(\theta_2)|)
    = \frac{|\det(Ah_1,Ah_2)|}{\|Ah_2\|\|Ah_1\|} \leq \frac{4 \pi^2}{\|A\|^2\varepsilon^2}
    $$
    This completes the proof.
\end{proof}

Now we are ready to provide the proof of Theorem~\ref{thm11}.

\begin{proof}[Proof of Theorem~\ref{thm11}]
    Let $\mathcal{M}(\mu)$ be the set of all
    $(f,A)$-invariant Borel
    probability measures on $X\times \mathbb{P}(\R^2)$.
    We want to prove that $\mathcal{M}(\mu)$ is actually a segment.
    Take $\hat{\nu}\in \mathcal{M}(\mu)$ and write
    $d\hat{\nu}= \nu_x \, d\mu(x)$.  Since $f$ is invertible and
    $\hat{\nu}$ is $(f,A)$-invariant then $\nu_{f(x)}=A(x)\nu_x$ for
    $\mu$-almost every $x\in X$. Consider
    $$
    \|\nu_x\|=\sup\{\,\nu_x(\{E\}): E\in \mathbb{P}(\mathbb{R}^2)\,\}.
    $$
    \begin{claim} \label{claim1}
        $\|\nu_x\|\geq 1/2$ for $\mu$-almost every $x\in X$.
    \end{claim}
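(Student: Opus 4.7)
The plan is to show first that $x\mapsto \|\nu_x\|$ is $f$-invariant, hence $\mu$-a.e.\ equal to a constant $c$ by ergodicity, and then to rule out $c<1/2$ by combining Lemma~\ref{lemma1} with essential unboundedness. Invariance of $\|\nu_x\|$ is immediate: since $A(x)\in \mathrm{SL}^{\pm}(2)$ acts as a homeomorphism of $\mathbb{P}(\mathbb{R}^2)$, the pushforward preserves the mass of every atom, so $\|\nu_{f(x)}\|=\|A(x)\nu_x\|=\|\nu_x\|$.

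Assume towards a contradiction that $c<1/2$, and fix $\eta>0$ with $c<1/2-\eta$. The first substantial step is to produce a \emph{uniform} radius of non-concentration. A compactness argument on $\mathbb{S}^1$ gives, for each $x$, some $\varepsilon^*(x)>0$ such that $\nu_x(B_r(\theta))<1/2-\eta/2$ for every $\theta\in\mathbb{S}^1$ and every $r\leq \varepsilon^*(x)$: otherwise sequences $r_n\to 0$, $\theta_n\to \theta^*$ with $\nu_x(B_{r_n}(\theta_n))\geq 1/2-\eta/2$ would force an atom $\nu_x(\{\theta^*\})\geq 1/2-\eta/2>c=\|\nu_x\|$. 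Since $x\mapsto \varepsilon^*(x)$ is measurable and everywhere positive, one can then fix $\varepsilon_0>0$ and a measurable set $E^*\subset X$ with $\mu(E^*)>1/2$ on which $\varepsilon^*(x)\geq \varepsilon_0$.

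Next, pick $x\in E^*$ at which Birkhoff's ergodic theorem applies. Birkhoff yields that $\{n\geq 0:f^n(x)\in E^*\}$ has density $\mu(E^*)>1/2$, while essentially unboundedness says that $\{n\geq 0:\|A^n(x)\|>K\}$ has upper density $1$ for every $K>0$; hence, for every $K$, their intersection contains some integer $n$. For such $n$, Lemma~\ref{lemma1} applied to $A^n(x)$ with $\varepsilon=\varepsilon_0$ produces a direction $\theta_0^{(n)}\in \mathbb{S}^1$ such that $A^n(x)\bigl(\mathbb{S}^1\setminus B_{\varepsilon_0}(\theta_0^{(n)})\bigr)$ is contained in an arc $I_n$ of diameter at most $L_n:=2\pi^3/(\|A^n(x)\|^2\varepsilon_0^2)<2\pi^3/(K^2\varepsilon_0^2)$. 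Since $x\in E^*$ forces $\nu_x(B_{\varepsilon_0}(\theta_0^{(n)}))<1/2-\eta/2$, we obtain
$$
\nu_{f^n(x)}(I_n)=\nu_x\bigl((A^n(x))^{-1}(I_n)\bigr)\geq \nu_x\bigl(\mathbb{S}^1\setminus B_{\varepsilon_0}(\theta_0^{(n)})\bigr)>\tfrac{1}{2}+\tfrac{\eta}{2}.
$$
On the other hand, choosing $K$ large enough that $L_n<\varepsilon_0$, the arc $I_n$ lies inside some ball $B_{\varepsilon_0}(y)$, and $f^n(x)\in E^*$ then gives $\nu_{f^n(x)}(I_n)\leq \nu_{f^n(x)}(B_{\varepsilon_0}(y))<1/2-\eta/2$, contradicting the previous display.

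The only non-routine point is the uniform-radius construction of $\varepsilon_0$ and $E^*$ in the second paragraph; once these are in hand, the contradiction follows from a clean combination of Lemma~\ref{lemma1}, Birkhoff's theorem and the essentially unbounded hypothesis.
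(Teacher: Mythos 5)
Your proof is correct and follows essentially the same route as the paper's: both hinge on Lemma~\ref{lemma1} (the contraction estimate for $\mathrm{SL}^{\pm}(2)$ on $\mathbb{S}^1$), Birkhoff's ergodic theorem, and the essential unboundedness hypothesis to rule out concentration below $1/2$. The only difference is organizational — the paper works directly with $\varphi_\varepsilon(x)=\sup_{|I|<\varepsilon}\nu_x(I)$ and shows $\mu(\{\varphi_\varepsilon<1/2\})=0$ by bounding $\|A^n(x)\|\le C_\varepsilon$ on a set of return times of positive lower density (contradicting essential unboundedness as a density statement), whereas you first invoke $f$-invariance of $x\mapsto\|\nu_x\|$ (which the paper only records after the claim) to obtain a constant $c<1/2$, extract a uniform gap $\eta$ and a uniform concentration radius $\varepsilon_0$ on a set $E^*$, and then exhibit a single $n$ with $f^n(x)\in E^*$ and $\|A^n(x)\|>K$ yielding a direct mass contradiction.
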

    \begin{proof}

        We identify $\mathbb{P}(\mathbb{R}^2)$ with $\mathbb{S}^1\equiv
        \mathbb{R}\,\mathrm{mod} \pi$ and see $\nu_x$ as a measure on
        $\mathbb{S}^1$. Let $\varepsilon>0$ be small enough. Consider
        $C_\varepsilon=\varepsilon^{-2}>0$. Set
        $$
        \varphi_\varepsilon(x) = \sup \{\nu_x(I): \ |I|<\varepsilon\},
        \ \
        B_\varepsilon=\{x\in X:  \varphi_\varepsilon(x)< 1/2 \}
        \ \ \text{and}   \ \
        B_\varepsilon(x)=\{n\geq 0: f^{n}(x) \in B_\varepsilon\}.
        $$
        By Birkhoff ergodic theorem if $\mu(B_\varepsilon)>0$, then
        $B_\varepsilon(x)$ has  asymptotic positive density for
        $\mu$-almost every $x\in X$. In particular, $B_\varepsilon(x)$ has
        lower asymptotic positive density for $\mu$-almost every point in
        $B_\varepsilon$. On the other hand, if $x\in B_\varepsilon$ and
        $n\in B_\varepsilon(x)$, then $\|A^n(x)\|\leq C_\varepsilon$.
        Indeed, assume that on the contrary $\|A^n(x)\|>C_\varepsilon$.
        Hence, applying Lemma~\ref{lemma1} there are arcs $I$, $J$ in
        $\mathbb{S}^1$ with $|I|,|J| <\varepsilon$ such that
        $A^n(x)(\mathbb{S}^1\setminus I) \subset J$. Hence
        $$
        1-\nu_x(I)=\nu_x(\mathbb{S}^1\setminus I) \leq
        \nu_x(A^n(x)^{-1}J)=\nu_{f^{n}(x)}(J)
        $$
        and thus
        $$
        1\leq \nu_x(I) + \nu_{f^{n}(x)}(J) \leq
        \varphi_\varepsilon(x) + \varphi_\varepsilon(f^{-n}(x)) <1/2 + 1/2
        =1
        $$
        which is impossible. Therefore, we get that if
        $\mu(B_\varepsilon)>0$, then for $\mu$-almost every $x\in
        B_\varepsilon$ there are a constant $C_\varepsilon>0$ and a set
        $B_\varepsilon(x)$ of positive lower density such that $\|A^n(x)\|
        \leq C_\varepsilon$ for all $n\in B_\varepsilon(x)$. That is,
        $\|A^n(x)\|$ does not converges essentially to infinity for
        $\mu$-almost every $x\in X$. Consequently, since this is contrary
        to the assumption, $\mu(B_\varepsilon)=0$ for all
        $\varepsilon>0$ small enough. This implies that
        $\varphi_\varepsilon(x)\geq 1/2$ for all small $\varepsilon>0$ and
        $\mu$-almost every $x\in X$. From this,
        \begin{equation*}
        \|\nu_x\| = \lim_{\varepsilon\to 0} \varphi_{\varepsilon}(x)
        \geq \frac{1}{2}   \quad \text{for $\mu$-almost every $x\in X$.}
        \hfill \qedhere
        \end{equation*}
    \end{proof}

    Assume now that $\hat{\nu}$ is ergodic. Since $\hat{\nu}$ is
    $(f,A)$-invariant then $\|\nu_{f(x)}\|=\|A(x)\nu_{x}\|=\|\nu_x\|$
    and thus $\varphi(x)=\|\nu_x\|$ is $f$-invariant. By the
    ergodicity, $\|\nu_x\|=C\geq 1/2$ for $\mu$-almost every $x\in X$.
    This implies that $\nu_x$ has either one or two atoms of maximal
    measure. Let $Y$ be the set of $x\in X$ such that $\nu_x$ has only
    one atom. Since $\hat\nu$ is $(f,A)$-invariant is not difficult to
    see that $Y$ must be $f$-invariant. Thus, again by the
    ergodicity of $\mu$, it follows that $Y$ has $\mu$-measure either
    zero or one. Namely,
    \begin{enumerate}[leftmargin=0.6cm]
        \item \label{case1} if $\mu(Y)=0$, then $\nu_x$ has two atoms $E_x$ and $F_x$ for $\mu$-almost $x\in X$.
        Moreover,
        $$
        \nu_x(\{E_x\})=\nu_x(\{F_x\})=\frac{1}{2} \quad \text{and thus}
        \quad \nu_x  =\frac{\delta_{E_x}+\delta_{F_x}}{2}.
        $$
        \item \label{case2} if $\mu(Y)=1$,  then $\nu_x$ has only
        one atom for $\mu$-almost $x\in X$. Moreover,  $E_{f(x)}=A(x)E_x$
        and thus the measure $d\hat{\rho}=\delta_{{E}_x} \, d\mu(x)$ is
        $(f,A)$-invariant. Since $\nu_x \, d\mu(x) \geq C \,\delta_{E_x}
        \, d\mu(x)$ then $\hat\rho$ is absolutely continuous with respect
        to $\hat{\nu}$. By standard arguments, since $\hat\nu$ is ergodic,
        it follows that $\hat{\nu}=\hat\rho$ and therefore
        $\nu_x=\delta_{E_x}$.
    \end{enumerate}
    Now, we will prove that there are at most two $(f,A)$-invariant
    ergodic measures on $X\times\mathbb{P}(\mathbb{R}^2)$.

    If we are in the above case~\eqref{case1}, then we have a measure
    $d\hat{\nu}= \frac{1}{2}(\delta_{E_x}+\delta_{F_x})\, d\mu(x)$.
    Suppose that we have another $(f,A)$-invariant measure of the form
    $d\hat{\rho}=\delta_{E'_x} \,d\mu(x)$ or
    $d\hat{\rho}=\frac{1}{2}(\delta_{E'_x}+\delta_{F'_x})\, d\mu(x)$.
    If $E'_x=E_x$ for $\mu$-almost every $x\in X$, then
    $\delta_{E_x}\,d\mu(x) \leq 2 \, d\hat\nu$ or $\delta_{F'_x} \,
    d\mu(x) \leq 2\, d\hat\nu$. Notice that either $\delta_{E_x}
    d\mu(x)$ or $\delta_{F'_x} \, d\mu(x)$  is a $(f,A)$-invariant
    measure. Then, similar to the above, the ergodicity of $\hat{\nu}$
    implies that either $2\, d\hat{\nu} =\delta_{E_x}\, d\mu(x)$ or
    $2\,d\hat\nu= \delta_{F'_x} \, d\mu(x)$ which, in both cases, is
    impossible. Therefore, $\hat\nu$ and $\hat\rho$ can not have
    projective atoms in common. Consequently, for any $0<
    \varepsilon<1$ the measure $\lambda_x \, d\mu(x)
    =(1-\varepsilon)\,d\hat\nu + \varepsilon\, d\hat\rho$ is
    $(f,A)$-invariant and $\|\lambda_x\| \leq
    \max\{\frac{1}{2}(1-\varepsilon), \varepsilon\}$. Taking
    $\varepsilon>0$ small enough we get that $\|\lambda_x\|<1/2$ which
    contradicts Claim~\ref{claim1}. Observe that in this case we have
    proved in fact that $\mathcal{M}(\mu)=\{\hat{\nu}\}$.

    If we are in the case~\eqref{case2}, we have
    $d\hat{\nu}=\delta_{E_x} \, d\mu(x)$. Suppose that we have another
    two different $(f,A)$-invariant ergodic measures $\hat{\rho}$ and
    $\hat{\lambda}$. By the above observation, necessarily
    $d\hat\rho=\delta_{E^1_x} \, d\mu(x)$ and
    $d\hat{\lambda}=\delta_{E^2_x} \, d\mu(x)$. Then, all the
    projective atoms must be different and consequently, the
    $(f,A)$-invariant
    measure $d\hat\varrho=\varrho_x\,d\mu(x)$ has
    $$
    \|\varrho_x\| <
    \frac{1}{2} \qquad \text{with} \quad
    \varrho_x=\frac{1}{3}(\delta_{E_x}+\delta_{E^1_x}+\delta_{E^2_x}).
    $$
    However, this is impossible according to Claim~\ref{claim1}. Thus, we can only have at most two $(f,A)$-invariant
    ergodic measures.

    This completes the proof of the theorem.
\end{proof}

Finally we  will show Theorem~\ref{main-cocycle} in the case of
essentially unbounded random $\mathrm{GL}(2)$-valued cocycle as
consequence of Theorem~\ref{thm11}.

\begin{prop} \label{prop:unbounded}
Let $(\bar{f},A)$ be an essentially unbounded  random
$\mathrm{GL}(2)$-valued cocycle. Then there is a  random cocycle
$(\bar{f},B)$ cohomologous to $(\bar{f},A)$ such that every $
(\bar{f},B)$-invariant probability measure $\hat{\mu}$ on $X\times
\mathbb{P}(\mathbb{R}^2)$ is of the form $\hat{\mu}= \mu\times
\nu$ where $\nu$ is a probability measure on
$\mathbb{P}(\mathbb{R}^2)$.
\end{prop}

\begin{proof}
Let $(\bar{f},A)$ be a random $\mathrm{GL}(2)$-valued cocycle.
Since the action of $A$ and $A \cdot |\det A|^{-1/2}$ on
$\mathbb{P}(\R^2)$ coincides we can assume that $(\bar{f},A)$ is a
random $\mathrm{SL}^\pm(2)$-valued cocycle. Now, let $\hat{\mu}$
be a $(\bar{f},A)$-invariant measure on $X\times
\mathbb{P}(\mathbb{R}^2)$. 
That is, $\hat\mu$ is a $(f_i,A_i)$-invariant measure for
$p$-almost every $i\in Y$ of the form $d\hat\mu=\nu_x \, d\mu(x)$
with $\nu_x$ a
measure on $\mathbb{P}(\mathbb{R}^2)$. 
Consider the product measure $\hat{\mu}_0=\mathbb{P}\times
\hat{\mu}$ on $\Omega\times(X\times \mathbb{P}(\mathbb{R}^2))$.
Observe that $\hat{\mu}_0$ is an invariant measure for the
skew-product $F\equiv(\bar{f},A):\bar{X}\times
\mathbb{P}(\mathbb{R}^2)\to \bar{X}\times
\mathbb{P}(\mathbb{R}^2)$. Moreover, since $d\hat{\mu}_0=\nu_x \,
d\mu(x)d\mathbb{P}(\omega)$,  $\hat{\mu}_0$ projects down on
$\bar{X}=\Omega\times X$ over $\bar{\mu}=\mathbb{P}\times \mu$.
According to Theorem~\ref{thm11}, there are $0\leq \lambda\leq 1$
and measurable families of one-dimensional linear subspace
$E_1(\bar{x})$ and $E_2(\bar{x})$ such that
$$
  \nu_x = \lambda \delta_{E_1(\bar{x})}+
  (1-\lambda) \delta_{E_2(\bar{x})}  \quad \text{for $\bar{\mu}$-almost every $\bar{x}=(\omega,x)\in \bar{X}=\Omega\times X$.}
$$
Since $\nu_x$ does not depend on $\omega$, then $E_1$ and $E_2$
does not depend either. Thus,
$$
\nu_x=\lambda \delta_{E_1(x)}+
  (1-\lambda) \delta_{E_2(x)}  \quad
  \text{for $\mu$-almost every
  $x\in X$}.
$$
Then $E_1(x)$ and $E_2(x)$ are $A_i$-invariant linear subspace for
$p$-almost every $i\in Y$, i.e.,
$$
\text{$A_i(x)\{E_1(x),E_2(x)\}=\{E_1(f_i(x)),E_2(f_i(x))\}$ \ \
for $\mu$-almost every $x\in X$.}
$$

Assume first that $\mu$-almost surely, $\mathbb{R}^2=E_1(x)\oplus
E_2(x)$. Hence, we take in a measurable way $P(x)=[U(x), V(x)]$
where $U(x)$ and $V(x)$  are vectors in $E_1(x)$ and $E_2(x)$
respectively. Then for $p$-almost every $i\in Y$,
$$
  \{R_i(x)U(f_i(x)),S_i(x)V(f_i(x))\}=\{ A_i(x) U(x),
  A_i(x)V(x)\}.
$$
Thus, we get that
\begin{align*}
\text{$B_i(x)=P(f_i(x))^{-1}A_i(x)P(x)$ \ \ is of the form \ \ }
\begin{psmallmatrix}
 R_i(x) & 0 \\ 0 & S_i(x)
\end{psmallmatrix}
 \quad \text{or} \quad
 \begin{psmallmatrix}
 0 & R_i(x) \\ S_i(x) & 0
 \end{psmallmatrix}.
\end{align*}
Otherwise, $\mu$-almost surely $\mathbb{R}^2\not=E_1(x)+ E_2(x)$.
Hence, we take in a measurable way the matrix $P(x)=[U(x), V(x)]$
where $U(x)$ is a vector in $E_1(x)=E_2(x)$ and $V(x)$ is other
non-collinear (ortogonal) vector. Then for $p$-almost every $i\in
Y$, we get that
\begin{align*}
\text{$B_i(x)=P(f_i(x))^{-1}A_i(x)P(x)$ \ \ is of the form \ \ }
\begin{psmallmatrix}
 R_i(x) & T_i(x) \\ 0 & S_i(x)
\end{psmallmatrix}.
\end{align*}

Consequently, any $(\bar{f},B)$-invariant measure $\hat\mu$ on
$X\times \mathbb{P}(\mathbb{R}^2)$ will be of the form
 $$
     d\hat\mu = \lambda \delta_{F_1} + (1-\lambda) \delta_{F_2} \, d\mu(x)
     \ \
     \text{for some $0\leq \lambda\leq 1$}
 $$
where $F_1$ and $F_2$ are linear subspaces generated by canonical
axis on $\mathbb{R}^2$. This completes the proof of the
proposition.
\end{proof}

\appendix
\addtocontents{toc}{\protect\setcounter{tocdepth}{-1}}
\renewcommand{\thesection}{\Alph{section}}
\renewcommand{\theequation}{\thesection.\arabic{equation}}
\setcounter{equation}{0} \setcounter{thm}{0}


\section{Stationary measure} \label{Apendix1}
Let us consider two standard Borel probability spaces $(X,\mu)$
and $(\Omega,\mathbb{P})$ and endow the product space
$\bar{X}=\Omega\times X$  with the product measure
$\bar{\mu}=\mathbb{P}\times \mu$. Consider a
$\bar{\mu}$-preserving measurable skew-product map
$$
  \bar{f}: \bar{X}\to \bar{X}, \quad
  \bar{f}(\omega,x)=(\theta\omega,f_\omega(x))
$$
where $\theta$ is a ergodic $\mathbb{P}$-invariant invertible
continuous transformation of $\Omega$ and $f_\omega:X\to X$ are
continuous $\mu$-preserving maps for $\mathbb{P}$-almost all
$\omega\in\Omega$. Let $Z$ be a locally compact Hausdorff
topological space and consider the induce Borel $\sigma$-algebra.
Again, consider a skew-product map
\begin{equation} \label{eq:apendix1}
   F:\bar{X}\times Z \to \bar{X}\times Z, \quad F(\bar{x},z)=(\bar{f}(\bar{x}),
   A(\bar{x})z)
\end{equation}
where $A(\bar{x}):Z\to Z$ are continuous maps $\mu$-almost surely.
Sometimes we write $F=\bar{f}\ltimes A(\bar x)$ or $F=(\bar{f},A)$
when no confusion can arise to emphasize the base map and the
fiber maps of~$F$. Observe that since $\bar{f}=\theta\ltimes
f_\omega$ is also a skew-product map, we can also rewrite $F=
\theta\ltimes F_\omega$ where $F_\omega= f_\omega\ltimes
A(\omega,x)$. That is, $F= \theta\ltimes f_\omega\ltimes
A(\omega,x)$.

\begin{defi} A measure $\hat\nu$ on $X\times Z$ is
\emph{$(\bar{f},A)$-stationary} if $\hat\nu$ projets down on $X$
over $\mu$ and
$$
     \hat\nu = \int F_\omega\hat\nu \, d\mathbb{P}(\omega).
$$
\end{defi}

Notice that by definition a stationary measure is not necessarily
a probability measure. Denote by $\mathscr{M}(Z)$ be the Banach
space of signed finite (not necessarily probability) Borel
measures on $Z$ with variation norm. This Banach space can be
identified with the dual of $C_0(Z)$, the space  of bounded
continuous real-valued functions on $Z$ vanishing at infinity with
supremum norm. Then we can endow $\mathscr{M}(Z)$ with the
weak$^*$ topology and consider the Borel $\sigma$-algebra induced
by this topology. Let $L^\infty(X;\mathscr{M}(Z))$ be the Banach
space of (equivalence classes of) essentially bounded measurable
mappings from $X$ to $\mathscr{M}(Z)$. Given $\nu: x \mapsto
\nu_x$ in $L^\infty(X;\mathscr{M}(Z))$ define the measure
$\hat\nu$ on $X\times Z$ by
$$
   \hat\nu(E\times B) = \int_E \nu_x(B) \, d\mu(x)
$$
for $E$, $B$ measurable sets on $X$ and $Z$ respectively and
extending to the product $\sigma$-algebra. By definition $\hat\nu$
has as marginal $\mu$ and disintegration $\nu: x \mapsto \nu_x$.
That is, $d\hat\nu= \nu_x\,d\mu(x)$.

Notice that $L^\infty(X;\mathscr{M}(Z))$ can be identified with
the dual of the Banach space
$$
  L^1(X;C_0(Z))\eqdef \{h:X \to  C_0(Z): \  \|h_x\|_\infty \in L^1(X,\mu)
  \}.
$$
We introduce the transfer operator $\mathscr{P}$ on
$L^1(X;C_0(Z))$ defined by
$$
 \mathscr{P}\varphi \eqdef \int \varphi \circ F_\omega \,
 d\mathbb{P}(\omega) \in L^1(X;C_0(Z)) \ \  \text{for $\varphi \in L^1(X;C_0(Z))$. }
$$
It is clear that $\mathscr{P}$ is a bounded linear operator. The
adjoint transition operator $ \mathscr{P}^*$ acts on
$L^\infty(X;\mathscr{M}(Z))$ by taking  $\nu: x  \mapsto \nu_x$ in
$L^\infty(X;\mathscr{M}(Z))$ and defining $\mathscr{P}^*\nu\in
L^\infty(X;\mathscr{M}(Z))$ as
$$
\mathscr{P}^*\nu: x \mapsto  (\mathscr{P}^*\nu)_x=\int
A(\theta^{-1}\omega,f_\omega^{-1}(x)) \nu_{f_\omega^{-1}(x)}
   \,d\mathbb{P}(\omega).
$$
Consequently $\mathscr{P}^*$ is also a bounded linear operator.
Moreover, if $\mathscr{P}^*\nu=\nu$, then $\hat\nu$ is a
$(\bar{f},A)$-stationary measure on $X\times Z$. Indeed,
\begin{align*}
  \int F_\omega \hat\nu(E\times B) \, d\mathbb{P}(\omega) &=
  \int \int_E A(\theta^{-1}\omega,f_\omega^{-1}(x))
  \nu_{f_\omega^{-1}(x)}(B) \, d\mu(x)d\mathbb{P}(\omega) \\
  &=\int_E (\mathscr{P}^*\nu)_x(B) \, d\mu(x)=\int_E
  \nu_x(B)\,d\mu(x) = \hat\nu(E\times B)
\end{align*}
for all $E$, $B$ measurable sets on $X$ and $Z$ respectively.
Hence $\hat\nu$ is a $(\bar{f},A)$-stationary measure.

According to Banach-Alaoglu's theorem
the unit ball in $L^\infty(X;\mathscr{M}(Z))$ is a compact set.
Moreover, since $X$ is a standard probability space, its
$\sigma$-algebra is countably generated. This implies that
$L^1(X;C_0(Z))$ is separable and thus $L^\infty(X;\mathscr{M}(Z))$
is metrizable \cite{Cr86}. Consequently the unit ball in
$L^\infty(X;\mathscr{M}(Z))$ is also sequentially compact. Now,
the existence of stationary probability measures for random
cocycles follows from standard arguments when $Z$ is compact.

\begin{prop} \label{prop-anex1}
Let $(\bar{f},A)$ be skew-product as~\eqref{eq:apendix1} acting on
$\bar{X}\times Z$. If $Z$ is a compact Hausdorff topological
space, then the set of $(\bar{f},A)$-stationary probability
measures on $X\times Z$ is nonvoid.
\end{prop}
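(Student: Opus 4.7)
The plan is to run a Krylov--Bogolyubov style argument directly in the Banach space $L^\infty(X;\mathscr{M}(Z))$ using the adjoint transfer operator $\mathscr{P}^*$ introduced just above the proposition. First I would choose a trivial starting element $\nu^{(0)}\in L^\infty(X;\mathscr{M}(Z))$, for instance the constant map $x\mapsto \delta_{z_0}$ for some fixed $z_0\in Z$, which corresponds to the probability measure $\hat\nu^{(0)}=\mu\times\delta_{z_0}$ on $X\times Z$. Since each slice of $\mathscr{P}^*\nu^{(0)}$ is an average (with respect to $\mathbb{P}$) of pushforwards of probability measures on $Z$, it is again a probability measure, so $(\mathscr{P}^*)^j\nu^{(0)}$ stays in the unit ball of $L^\infty(X;\mathscr{M}(Z))$ for every $j\geq 0$.

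I would then form the Cesàro averages
$$
\nu^{(n)}\eqdef \frac{1}{n}\sum_{j=0}^{n-1} (\mathscr{P}^*)^j\nu^{(0)}\in L^\infty(X;\mathscr{M}(Z)),
$$
which again lie in the closed unit ball. By Banach--Alaoglu together with the metrizability of the weak$^*$ topology on the unit ball noted in the text (separability of $L^1(X;C_0(Z))$), I can extract a weak$^*$-convergent subsequence $\nu^{(n_k)}\to \nu$. Since $\mathscr{P}\colon L^1(X;C_0(Z))\to L^1(X;C_0(Z))$ is bounded linear, its adjoint $\mathscr{P}^*$ is weak$^*$-continuous, and the telescoping identity
$$
\mathscr{P}^*\nu^{(n)}-\nu^{(n)}=\frac{1}{n}\bigl((\mathscr{P}^*)^n\nu^{(0)}-\nu^{(0)}\bigr)
$$
tends to zero in norm as $n\to\infty$ because the right-hand side is bounded in $L^\infty$ and divided by $n$. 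Passing to the limit along $n_k$ gives $\mathscr{P}^*\nu=\nu$, which by the computation displayed before the proposition means that $\hat\nu$ defined by $d\hat\nu=\nu_x\,d\mu(x)$ is $(\bar f,A)$-stationary.

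It remains to check that $\hat\nu$ is a probability measure that projects on $\mu$, and this is the one point where the hypothesis that $Z$ is compact is genuinely used. Compactness ensures that the constant function $\mathbf{1}_Z$ belongs to $C_0(Z)=C(Z)$, so for every measurable $E\subset X$ the function $h\colon x\mapsto \mathbf{1}_E(x)\mathbf{1}_Z$ belongs to $L^1(X;C_0(Z))$; testing $\nu^{(n)}$ against $h$ gives $\int_E\nu^{(n)}_x(Z)\,d\mu(x)=\mu(E)$ for all $n$ because each slice $\nu^{(n)}_x$ is a probability measure, and passing to the weak$^*$ limit yields $\int_E\nu_x(Z)\,d\mu(x)=\mu(E)$. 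Since $E$ is arbitrary, $\nu_x(Z)=1$ for $\mu$-almost every $x\in X$, which simultaneously gives that $\hat\nu$ is a probability measure and that its projection on $X$ is $\mu$.

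The only obstacle worth flagging is this last mass-preservation step: in a non-compact fiber $Z$ a weak$^*$ limit could lose mass at infinity, so compactness (through $\mathbf{1}_Z\in C_0(Z)$) is what prevents the normalization from degenerating and is the reason the statement is made only for compact $Z$. Everything else is a routine adaptation of the classical Krylov--Bogolyubov construction to this skew-product setting.
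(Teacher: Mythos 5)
Your proof is correct but takes a genuinely different route from the paper's. The paper observes that $L^\infty(X;\mathscr{P}(Z))$ is a convex, $\mathscr{P}^*$-invariant, weak$^*$-compact subset of the unit ball of $L^\infty(X;\mathscr{M}(Z))$ (compactness of $Z$ is used to show this set is weak$^*$-closed, which is exactly where mass preservation is absorbed) and then concludes by a fixed-point theorem; the text cites Brouwer, although for an infinite-dimensional weak$^*$-compact convex set and an affine weak$^*$-continuous self-map one should really invoke Markov--Kakutani or Schauder--Tychonoff. You instead run the classical Krylov--Bogolyubov scheme directly: Cesàro averages of iterates of a seed element, sequential weak$^*$-compactness of the unit ball via Banach--Alaoglu plus metrizability, the telescoping identity to get $\mathscr{P}^*$-invariance of the limit, and then an explicit mass-preservation check using $\mathbf{1}_Z\in C_0(Z)$. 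Your argument is more constructive, and in fact it is essentially the content of the paper's Lemma~\ref{lem:Apendix1}, which the authors reserve for the non-compact fiber case in Proposition~\ref{medida}; you have specialized it to the compact case and supplied the extra normalization step. One small gap worth closing: besides $\nu_x(Z)=1$ for $\mu$-a.e.\ $x$, you should also verify that $\nu_x\geq 0$ for $\mu$-a.e.\ $x$, so that the limit slices are honest probability measures and not merely signed measures of total mass one; this follows by the same test-function device applied to nonnegative $h\in L^1(X;C_0(Z))$ together with separability of $C_0(Z)$.
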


\begin{proof}
Denote by $\mathscr{P}(Z)$ the subset of $\mathscr{M}(Z)$ of
probability measure. Notice that
$$
L^\infty(X;\mathscr{P}(Z))=\{\nu\in L^\infty(X;\mathscr{P}(Z)):
\nu_x \in \mathscr{P}(Z) \ \  \text{$\nu$-almost surely} \}
$$
is a convex subset of the unit ball in
$L^\infty(X;\mathscr{M}(Z))$. Moreover, $\mathscr{P}^*$ leaves
$L^\infty(X;\mathscr{P}(Z))$ invariant. Since, by assumption $Z$
is compact, $L^\infty(X;\mathscr{P}(Z))$ is closed and hence
compact in the weak$^*$ topology.  Brouwer's fixed-point theorem
yields the existence of a  $\mathscr{P}^*$-invariant element $\nu
\in L^\infty(X;\mathscr{P}(Z))$. This yields a
$(\bar{f},A)$-stationary measure $\hat\nu$ on $X\times Z$ defined
by $d\hat\nu=\nu_x\,d\mu(x)$ and completes the proof.
\end{proof}

When $Z$ is not compact we can not guarantee in general that the
set of $(\bar{f},A)$-stationary probability measure is nonvoid.
However the following lemma provides a powerful method to find
stationary finite measure.

\begin{lem} \label{lem:Apendix1}
Let $\nu\in L^\infty(X;\mathscr{P}(Z))$. Then, the set of
accumulation point $\eta \in L^\infty(X;\mathscr{M}(Z))$ of the
sequence $(\nu_n)_n$ given by
$$
    \nu_n=\frac{1}{n}\sum_{j=0}^{n-1} \mathscr{P}^{*j}\nu \in L^\infty(X;\mathscr{P}(Z))  \quad
    \text{for $n\in \mathbb{N}$}
$$
is nonvoid.  Moreover, any accumulation point $\eta$ of
$(\nu_n)_n$ defines a $(\bar{f},A)$-stationary finite measure
$\hat\eta$ on $X\times Z$ whose disintegration is $\eta$. 
Consequently $\hat\eta$ is an accumulation point in the weak$^*$
topology of the sequence of probability measures $(\hat\nu_n)_n$
on $X\times Z$ defined by the disintegrations $(\nu_n)_n$.
\end{lem}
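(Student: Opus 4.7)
The plan is a weak$^*$ Krylov--Bogoliubov argument carried out in the dual pair $\bigl(L^1(X;C_0(Z)),\ L^\infty(X;\mathscr{M}(Z))\bigr)$. First, since $\nu\in L^\infty(X;\mathscr{P}(Z))$ and $\mathscr{P}^*$ preserves $L^\infty(X;\mathscr{P}(Z))$ (this was noted for the proof of Proposition~\ref{prop-anex1}), each Ces\`aro average $\nu_n$ again lies in $L^\infty(X;\mathscr{P}(Z))$, hence inside the unit ball of $L^\infty(X;\mathscr{M}(Z))$. That unit ball is weak$^*$-compact by Banach--Alaoglu and, because $L^1(X;C_0(Z))$ is separable, sequentially compact. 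So some subsequence $(\nu_{n_k})$ converges in the weak$^*$ topology to an element $\eta\in L^\infty(X;\mathscr{M}(Z))$. This already yields the first assertion.

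Next I would verify that $\mathscr{P}^*\eta=\eta$ by the standard telescoping identity
$$
   \mathscr{P}^*\nu_n-\nu_n \;=\; \frac{1}{n}\bigl(\mathscr{P}^{*n}\nu-\nu\bigr),
$$
whose right-hand side has operator norm $\leq 2/n$ and therefore tends to $0$ in norm, a fortiori in the weak$^*$ sense. The operator $\mathscr{P}:L^1(X;C_0(Z))\to L^1(X;C_0(Z))$ is bounded, so its adjoint $\mathscr{P}^*$ is weak$^*$-continuous on the unit ball; passing to the limit along $n_k$ gives $\mathscr{P}^*\eta=\eta$. The computation already in the text preceding Proposition~\ref{prop-anex1} then shows that the measure $\hat\eta$ on $X\times Z$ defined by $d\hat\eta=\eta_x\,d\mu(x)$ is $(\bar f,A)$-stationary. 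Positivity of $\eta_x$ (so that $\hat\eta$ is a bona fide non-negative measure) is inherited from the positivity of each $\nu_n$ by testing the weak$^*$ limit against non-negative $h\in L^1(X;C_0(Z))$, and finiteness follows from $\|\eta\|_\infty\leq 1$.

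Finally, for the weak$^*$ convergence of $(\hat\nu_{n_k})$ to $\hat\eta$ as measures on $X\times Z$: given $\varphi\in C_0(X\times Z)$ (it suffices to check continuous compactly supported $\varphi$, and on product spaces these are uniformly approximated by finite sums $\sum_j \psi_j(x)\chi_j(z)$ with $\psi_j$ bounded measurable and $\chi_j\in C_0(Z)$), the map $x\mapsto \varphi(x,\cdot)$ belongs to $L^1(X;C_0(Z))$, and pairing with $\nu_{n_k}\to\eta$ in the weak$^*$ sense reads exactly as $\int\varphi\,d\hat\nu_{n_k}\to\int\varphi\,d\hat\eta$. This delivers the last statement.

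The main obstacle, as I see it, is the bookkeeping needed to ensure that the weak$^*$ topology on $L^\infty(X;\mathscr{M}(Z))$ pulls back cleanly to the weak$^*$ topology on measures on the product space $X\times Z$, so that accumulation points of $(\nu_n)$ in the abstract dual sense really correspond to accumulation points of the probability measures $(\hat\nu_n)$. Once that identification is made, positivity, stationarity, and Ces\`aro invariance all follow by routine arguments.
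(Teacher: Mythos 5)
Your proposal is correct and follows essentially the same route as the paper: sequential weak$^*$-compactness of the unit ball of $L^\infty(X;\mathscr{M}(Z))$ (via Banach--Alaoglu and separability of $L^1(X;C_0(Z))$) gives accumulation points, and the Ces\`aro telescoping bound $\|\mathscr{P}^*\nu_n-\nu_n\|_\infty\leq 2/n$ forces $\mathscr{P}^*$-invariance of any limit. You merely spell out the ``well known arguments'' the paper leaves implicit, and your parenthetical product-approximation remark is unnecessary since, as you also note, $x\mapsto\varphi(x,\cdot)$ lies directly in $L^1(X;C_0(Z))$ for any $\varphi\in C_c(X\times Z)$.
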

\begin{proof}
Since the unit ball of $L^\infty(X;\mathscr{M}(Z))$ is
sequentially compact then we can extract a convergent subsequence
from $(\nu_n)_n$ and thus the set of accumulation points is not
empty. Moreover, any accumulation point belongs to this ball. Thus
$(\nu_n)_x$ is a finite measure $\mu$-almost surly. On the other
hand, by well known arguments the limit $\eta$ of any convergent
sequences of $(\nu_n)_n$ is also $\mathscr{P}^*$-invariant and
thus $\hat\eta$ is a $(\bar{f},A)$-stationary measure on $X\times
Z$.
\end{proof}

\subsection*{Acknowledgements}
During the preparation of this article the first author was
supported by MTM2017-87697-P from Ministerio de Econom\'ia y
Competividad de Espa\~na and CNPQ-Brasil. The authors were also
funded by RFBM (R\'eseau Franco-Br\'esilien en Math\'ematiques).

\bibliographystyle{alpha}
\bibliography{biblio}

\def\cprime{$'$}
\begin{thebibliography}{GIKN05}

\bibitem[AC97]{AC97}
Ludwig Arnold and Nguyen~Dinh Cong.
\newblock On the simplicity of the lyapunov spectrum of products of random
  matrices.
\newblock {\em Ergodic Theory and Dynamical Systems}, 17(5):1005--1025, 1997.

\bibitem[ACO97]{ACO97}
Ludwig Arnold, Nguyen~Dinh Cong, and Valery~Iustinovich Oseledets.
\newblock {\em Jordan normal form for linear cocycles}.
\newblock Inst. f{\"u}r Dynamische Systeme, 1997.

\bibitem[AS70]{AS70}
R.~Abraham and S.~Smale.
\newblock Nongenericity of {O}-stability.
\newblock In {\em Global {A}nalysis ({P}roc. {S}ympos. {P}ure {M}ath., {V}ol.
  {XIV}, {B}erkeley, {C}alif., 1968)}, pages 5--8. Amer. Math. Soc.,
  Providence, R.I., 1970.

\bibitem[ASV13]{ASV13}
Artur Avila, Jimmy Santamaria, and Marcelo Viana.
\newblock Holonomy invariance: rough regularity and applications to lyapunov
  exponents.
\newblock {\em Ast{\'e}risque}, 358:13--74, 2013.

\bibitem[Avi11]{A11}
Artur Avila.
\newblock Density of positive lyapunov exponents for ${G}(2,{R})$-cocycles.
\newblock {\em Journal of the American Mathematical Society}, 24(4):999--1014,
  2011.

\bibitem[BBD14]{BBD14}
Jairo Bochi, Christian Bonatti, and Lorenzo~J D{\'\i}az.
\newblock Robust vanishing of all lyapunov exponents for iterated function
  systems.
\newblock {\em Mathematische Zeitschrift}, 276(1-2):469--503, 2014.

\bibitem[BBD16]{BBD16}
Jairo Bochi, Christian Bonatti, and Lorenzo~J D{\'\i}az.
\newblock Robust criterion for the existence of nonhyperbolic ergodic measures.
\newblock {\em Communications in Mathematical Physics}, 344(3):751--795, 2016.

\bibitem[Cra86]{Cr86}
Hans Crauel.
\newblock Lyapunov exponents and invariant measures of stochastic systems on
  manifolds.
\newblock In {\em Lyapunov exponents}, pages 271--291. Springer, 1986.

\bibitem[Cra90]{C90}
Hans Crauel.
\newblock Extremal exponents of random dynamical systems do not vanish.
\newblock {\em Journal of Dynamics and Differential Equations}, 2(3):245--291,
  1990.

\bibitem[CS89]{CS89}
Chong-Qing Cheng and Yi-Sui Sun.
\newblock Existence of invariant tori in three-dimensional measure-preserving
  mappings.
\newblock {\em Celestial Mechanics and Dynamical Astronomy}, 47(3):275--292,
  1989.

\bibitem[EG92]{EG}
Lawrence~Craig Evans and Ronald~F Gariepy.
\newblock {\em Measure theory and fine properties of functions}.
\newblock CRC press, 1992.

\bibitem[Fur63]{F63}
Harry Furstenberg.
\newblock Noncommuting random products.
\newblock {\em Transactions of the American Mathematical Society},
  108(3):377--428, 1963.

\bibitem[GIKN05]{gorodetski2005nonremovable}
Anton~Semenovich Gorodetski, Yu~S Ilyashenko, Viktor~Alekseevich Kleptsyn, and
  Maxim~Borisovich Nalsky.
\newblock Nonremovable zero lyapunov exponent.
\newblock {\em Functional Analysis and Its Applications}, 39(1):21--30, 2005.

\bibitem[Gui06]{guide2006infinite}
A~Hitchhiker's Guide.
\newblock {\em Infinite dimensional analysis}.
\newblock Springer, 2006.

\bibitem[Her90]{Her90}
M~Herman.
\newblock Stabilit{\'e} topologique des syst{\'e}mes dynamiques conservatifs.
\newblock {\em preprint}, 1990.

\bibitem[Kec12]{kechris2012classical}
Alexander Kechris.
\newblock {\em Classical descriptive set theory}, volume 156.
\newblock Springer Science \& Business Media, 2012.

\bibitem[KN07]{KN07}
Victor~Alexeevitch Kleptsyn and Maxim~Borisovich Nalsky.
\newblock Persistence of nonhyperbolic measures for {$C^1$}-diffeomorphisms.
\newblock {\em Functional Analysis and Its Applications}, 41(4):271, 2007.

\bibitem[Kni92]{K92}
Oliver Knill.
\newblock Positive lyapunov exponents for a dense set of bounded measurable sl
  (2, r)-cocycles.
\newblock {\em Ergodic Theory and Dynamical Systems}, 12(2):319--331, 1992.

\bibitem[Led86]{Le86}
Fran{\c{c}}ois Ledrappier.
\newblock Positivity of the exponent for stationary sequences of matrices.
\newblock In {\em Lyapunov exponents}, pages 56--73. Springer, 1986.

\bibitem[LQ06]{LM06}
Pei-Dong Liu and Min Qian.
\newblock {\em Smooth ergodic theory of random dynamical systems}.
\newblock Springer, 2006.

\bibitem[LY17]{LY17}
Chao Liang and Yun Yang.
\newblock The ${C}^{1}$ density of nonuniform hyperbolicity in ${C}^{r}$
  conservative diffeomorphisms.
\newblock {\em Proceedings of the American Mathematical Society},
  145(4):1539--1552, 2017.

\bibitem[Mal17]{malicet2017random}
Dominique Malicet.
\newblock Random walks on homeo({$S^1$}).
\newblock {\em Communications in Mathematical Physics}, 356(3):1083--1116,
  2017.

\bibitem[New70]{New70}
S.~E. Newhouse.
\newblock Nondensity of axiom {A(a)} on {$S^2$}.
\newblock In {\em Global {A}nalysis ({P}roc. {S}ympos. {P}ure {M}ath., {V}ol.
  {XIV}, {B}erkeley, {C}alif., 1968)}, pages 191--202. Amer. Math. Soc.,
  Providence, R.I., 1970.

\bibitem[OO96]{Os96}
Gunter Ochs and Valery~I Oseledets.
\newblock {\em On recurrent cocycles and the non-existence of random fixed
  points}.
\newblock Technical Report 382, Institut f\"ur Dynamische Systeme,
  Universit\"at Bremen, 1996.

\bibitem[OP18]{OP18}
D.~Obata and M.~Polleti.
\newblock On the genericity of positive exponents of conservative skew produts
  with two-dimensional fibers.
\newblock {\em preprint}, 2018.

\bibitem[Pes77]{Pe77}
Yakov~Borisovich Pesin.
\newblock Characteristic lyapunov exponents and smooth ergodic theory.
\newblock {\em Russian Mathematical Surveys}, 32(4):55--114, 1977.

\bibitem[Sch81]{S81}
Klaus Schmidt.
\newblock Amenability, kazhdan's property t, strong ergodicity and invariant
  means for ergodic group-actions.
\newblock {\em Ergodic Theory and Dynamical Systems}, 1(2):223--236, 1981.

\bibitem[Sma67]{Sm67}
Stephen Smale.
\newblock Differentiable dynamical systems.
\newblock {\em Bulletin of the American mathematical Society}, 73(6):747--817,
  1967.

\bibitem[Via08]{V08}
Marcelo Viana.
\newblock Almost all cocycles over any hyperbolic system have nonvanishing
  lyapunov exponents.
\newblock {\em Annals of Mathematics}, pages 643--680, 2008.

\bibitem[Via14]{Via14}
Marcelo Viana.
\newblock {\em Lectures on Lyapunov exponents}, volume 145.
\newblock Cambridge University Press, 2014.

\bibitem[Xia92]{Xia92}
Zhihong Xia.
\newblock Existence of invariant tori in volume-preserving diffeomorphisms.
\newblock {\em Ergodic Theory and Dynamical Systems}, 12(3):621--631, 1992.

\bibitem[Zim84]{Z84}
RJ~Zimmer.
\newblock Ergodic theory and semisimple groups.
\newblock {\em Monogr. Math.}, 81, 1984.

\end{thebibliography}

\end{document}